\theoremstyle{plain}
\newtheorem{theorem}[equation]{Theorem}
\newtheorem{lemma}[equation]{Lemma}
\theoremstyle{definition}
\newtheorem{definition}[equation]{Definition}
\theoremstyle{remark}
\newtheorem{remark}[equation]{Remark}
\numberwithin{equation}{section}
\newcommand{\RR}{{\mathbb{R}}}
\newcommand{\eps}{\varepsilon}
\newcommand{\dint}{\int\!\!\!\int}
\newcommand{\dist}{\operatorname{dist}}
\newcommand{\tom}{\omega_{\star}}
\newcommand{\ts}{\sigma_{\star}}
\newcommand{\re}{\mathbb{R}}
\newcommand{\ree}{\mathbb{R}^{n+1}}
\newcommand{\dd}{\mathbb{D}}
\newcommand{\C}{\mathcal{C}}
\newcommand{\F}{\mathcal{F}}
\newcommand{\M}{\mathcal{M}}
\newcommand{\W}{\mathcal{W}}
\newcommand{\R}{\mathcal{R}}
\newcommand{\s}{\mathcal{S}}
\newcommand{\oo}{\mathcal{O}}
\newcommand{\Pf}{\mathcal{P}_{\mathcal{F}}}
\newcommand{\Pfo}{\mathcal{P}_{\mathcal{F}_0}}
\newcommand{\mut}{\mathfrak{m}}
\newcommand{\NN}{\mathfrak{N}}
\newcommand{\pom}{\partial\Omega}
\newcommand{\hm}{\omega}
\renewcommand{\P}{\mathcal{P}}
\renewcommand{\emptyset}{\mbox{\textup{\O}}}
\DeclareMathOperator{\diam}{diam}
\DeclareMathOperator{\interior}{int}
\begin{document}
\allowdisplaybreaks

\title[Uniform Rectifiability and Harmonic Measure III]{Uniform Rectifiability and Harmonic Measure III:  Riesz transform bounds imply uniform rectifiability of boundaries of 1-sided NTA domains}

\author{Steve Hofmann}

\address{Steve Hofmann
\\
Department of Mathematics
\\
University of Missouri
\\
Columbia, MO 65211, USA} \email{hofmanns@missouri.edu}

\author{Jos\'{e} Mar\'{\i}a Martell}

\address{Jos\'{e} Mar\'{\i}a Martell\\
Instituto de Ciencias Matem\'{a}ticas CSIC-UAM-UC3M-UCM\\
Consejo Superior de Investigaciones Cient\'{\i}ficas\\
C/ Nicol\'{a}s Cabrera, 13-15\\
E-28049 Madrid, Spain} \email{chema.martell@icmat.es}

\author{Svitlana Mayboroda}

\address{Svitlana Mayboroda
\\
Department of Mathematics
\\
University of Minnesota
\\
Minneapolis, MN 55455, USA} \email{svitlana@math.umn.edu}

\thanks{The first author was supported by the NSF. The second author was supported by MINECO Grant MTM2010-16518 and ICMAT Severo Ochoa project SEV-2011-0087. The third author was  supported by the Alfred P. Sloan Fellowship, the NSF CAREER Award DMS 1056004,  and the NSF Materials Research Science and Engineering Center Seed Grant.}

\date{July 3, 2012. \textit{Revised}: \today}

\subjclass[2010]{31B05, 35J08, 35J25, 42B20, 42B25, 42B37}

\keywords{Singular integrals, Riesz transforms,
Harmonic measure, Poisson kernel, uniform rectifiability,
Carleson measures.}

\begin{abstract}
Let $E\subset \ree$, $n\ge 2$, be a closed, Ahlfors-David regular set of dimension $n$ satisfying the ``Riesz Transform bound"
$$\sup_{\eps>0}\int_E\left|\int_{\{y\in E:|x-y|>\eps\}}\frac{x-y}{|x-y|^{n+1}} \,f(y)\, dH^n(y)\right|^2 dH^n(x) \,\leq \,C
\int_E|f|^2 dH^n\,.$$  Assume further that $E$ is the boundary of a domain $\Omega\subset\ree$ satisfying the Harnack Chain condition plus an
interior (but not exterior) Corkscrew condition. Then $E$ is uniformly rectifiable.
\end{abstract}

\maketitle

\tableofcontents

\section{Introduction}

In 1991 David and Semmes proved that for a given Ahlfors-David regular set $E$ the boundedness of {\it all} singular integral operators  with odd infinitely smooth kernels in $L^2(E)$ is equivalent to the property  that $E$ is Uniformly Rectifiable \cite{DS1}. Rectifiability means that $E$ consists of a countable number of images of Lipschitz mappings, modulo a set of measure zero, and Uniform Rectifiability is a certain quantitative, scale-invariant, version of this property.
While the method of proof in \cite{DS1} required use of a large class of singular integral operators,
it has been conjectured by the authors that  the boundedness in $L^2(E)$ of {\it one} key singular operator, the Riesz transform, is sufficient for Uniform Rectifiability of $E$.

The  two-dimensional version of the Riesz transform conjecture, i.e.,
the fact that boundedness of the Cauchy transform in $L^2(E)$ (where $E$ is a 1-dimensional ADR set in the plane)
implies Uniform Rectifiability of $E$, has been established by Mattila, Melnikov and Verdera  \cite{MMV}, using the so-called Menger curvature.
It was formally shown, however, that the Menger curvature approach is rigidly restricted to
$n+1=2$ \cite{F}.
This result, and the development of related ideas, had significant consequences,
including the eventual resolution, in the remarkable work of X. Tolsa \cite{T}, of a conjecture of Vitushkin 
concerning the semi-additivity of analytic capacity, and the related 1880 Painlev\'{e} problem of characterizing removable singularities of analytic functions in metric/geometric terms;  see also
the earlier related work of Guy David \cite{DavidPain}, as well as the monograph of A. Volberg \cite{Vo}, where the author proves a higher dimensional analogue of Vitushkin's conjecture.

The main result of the present paper is as follows.
\begin{theorem}\label{theor:main-RT}
Let $E\subset \ree$, $n\ge 2$,  be a closed, $n$-dimensional Ahlfors-David regular set such that the Riesz transform is bounded in $L^2(E)$, that is,
\begin{equation}\label{main-RT}
\sup_{\eps>0}\int_E\left|\int_{\{y\in E:|x-y|>\eps\}}\frac{x-y}{|x-y|^{n+1}} \,f(y)\, dH^n(y)\right|^2 dH^n(x) \,\leq \,C_E
\int_E|f|^2 dH^n\,.
\end{equation}
Assume further that $E$ is the boundary of a connected open set
$\Omega\subset \ree$ that satisfies interior (but not necessarily
exterior) Corkscrew and Harnack Chain conditions. Then $E$ is uniformly rectifiable.
\end{theorem}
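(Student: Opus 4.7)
The plan is to cascade the Riesz transform bound through three successive reformulations, ultimately reducing the statement to a harmonic-measure criterion established in an earlier part of this series.

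First, observe that $(x-y)/|x-y|^{n+1}$ is, up to a dimensional constant, the gradient of the fundamental solution for the Laplacian, so (\ref{main-RT}) is precisely the $L^2(E)$-boundedness of the principal-value boundary operator associated with the gradient of the single-layer potential
\[
\mathcal{S}f(X) \;=\; c_n\int_E |X-y|^{1-n}\, f(y)\, dH^n(y),\qquad X\in\ree\setminus E.
\]
Combining (\ref{main-RT}) with standard Cotlar/Calder\'on--Zygmund arguments on the ADR set $E$, and using interior cones inside $\Omega$ supplied by the interior Corkscrew and Harnack Chain hypotheses, one upgrades the bound to a non-tangential maximal function estimate $\|N_*(\nabla\mathcal{S}f)\|_{L^2(E)}\lesssim \|f\|_{L^2(E)}$. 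Since every component of $\nabla\mathcal{S}f$ is harmonic in $\Omega$, and 1-sided NTA domains admit a Dahlberg--Jerison--Kenig-type $N_*$--$S$ equivalence, this yields the square-function bound
\[
\iint_\Omega |\nabla^2\mathcal{S}f(X)|^2\, \dist(X,E)\, dX \;\lesssim\; \|f\|_{L^2(E)}^2.
\]

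Next, specialising to $f=\mathbf{1}_Q$ for Christ--David cubes $Q\subset E$ yields the scale-invariant Carleson bound $\iint_{\mathcal{T}(Q)}|\nabla^2\mathcal{S}\mathbf{1}|^2\,\dist(X,E)\,dX\lesssim H^n(Q)$ on each Carleson region $\mathcal{T}(Q)$. Fix now an interior corkscrew point $X_0$ far from $Q$. On $\mathcal{T}(Q)$, the positive harmonic functions $\mathcal{S}\mathbf{1}$ and $G(X_0,\cdot)$ (the Green function with pole at $X_0$) have comparable non-tangential behaviour: this follows from ADR growth together with the 1-sided NTA structure and the boundary Harnack principle. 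Consequently, the Carleson estimate for $\mathcal{S}\mathbf{1}$ transfers into a Carleson control on $|\nabla\log G(X_0,\cdot)|^2\,\dist(X,E)\,dX$. By the standard BMO/$A_\infty$ dictionary for harmonic measure on 1-sided NTA domains, this last condition is equivalent to $\log k^{X_0}\in\mathrm{BMO}(H^n|_E)$, i.e.\ to $\omega^{X_0}\in A_\infty(H^n|_E)$. We then invoke the main result of Part~II of the series, which asserts that $A_\infty$ of harmonic measure with respect to surface measure on the boundary of a 1-sided NTA domain with ADR boundary implies uniform rectifiability of that boundary.

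The main obstacle is the passage from the square-function bound to a genuine $A_\infty$ (or Carleson) estimate for the Green function in the \emph{absence} of any exterior Corkscrew condition. One must compare $\mathcal{S}\mathbf{1}$ to $G(X_0,\cdot)$ on Whitney regions using only interior tools, and control the tail contribution from distant scales of the single layer. A secondary technical difficulty is that the Calder\'on--Zygmund-type upgrade in Step~1, from principal-value boundedness to non-tangential control, must be performed on a set $E$ that is \emph{not yet} known to be uniformly rectifiable; one therefore cannot invoke off-the-shelf CZ calculus for UR sets, and must instead proceed via a $Tb$-style argument exploiting only the ADR property and the interior domain structure.
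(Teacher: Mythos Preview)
Your proposal has a genuine gap at the second step, and the subsequent steps compound it.

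The claimed ``Dahlberg--Jerison--Kenig-type $N_*$--$S$ equivalence'' in $L^2(\sigma)$ on a 1-sided NTA domain with ADR boundary is not available as a black box; indeed, the direction $S\lesssim N_*$ in $L^2(\sigma)$ for harmonic functions is essentially what you are trying to prove. Notice that the conclusion you draw from it,
\[
\iint_{\mathcal{T}(Q)}|\nabla^2\mathcal{S}\mathbf{1}|^2\,\dist(X,E)\,dX\;\lesssim\; H^n(Q),
\]
is exactly the Carleson condition \eqref{eq1.sf} that \emph{defines} uniform rectifiability in this paper. So if that step were legitimate, the theorem would be finished right there and the entire harmonic-measure detour would be superfluous. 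The $N_*$--$S$ comparisons that \emph{are} known in NTA-type settings are stated in $L^2(\omega)$, not $L^2(\sigma)$, and converting between the two is precisely the $A_\infty$ statement you are after.

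The next step also fails: $\mathcal{S}\mathbf{1}$ is harmonic and positive in $\Omega$, but it does \emph{not} vanish on $\partial\Omega$, so the boundary Harnack principle gives no comparison between $\mathcal{S}\mathbf{1}$ and $G(X_0,\cdot)$. Hence there is no mechanism to transfer a Carleson bound on $|\nabla^2\mathcal{S}\mathbf{1}|^2\delta$ into one on $|\nabla\log G(X_0,\cdot)|^2\delta$, and the invoked ``BMO/$A_\infty$ dictionary'' is, in any case, not established at this level of generality.

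What the paper actually does is quite different in structure. It first passes to approximating subdomains $\Omega_N$ that enjoy a \emph{qualitative} exterior Corkscrew (so \eqref{CFMS} and \eqref{doubling} hold with uniform constants). Inside $\Omega_N$, a stopping-time construction produces sawtooth regions on which $\nabla\mathcal{S}\mathbf{1}$ is bounded pointwise (via the $N_*$ bound, which you correctly identify). An integration-by-parts argument then yields a Carleson estimate, but with weight $G(X,X_0)$ and normalised by \emph{harmonic} measure $\omega^{X_0}(Q)$---not by $\delta(X)$ and $\sigma(Q)$. This harmonic-measure Carleson condition is then fed into the ``extrapolation of Carleson measures'' machinery of \cite{HM-I} (Lemma~\ref{lemma:extrapol} here), which outputs $\omega^{X_0}\in A_\infty^{\rm dyadic}$, and only then does \cite{HMU} give UR. The point is that the Green-function weight is what one can actually control via integration by parts against $|\nabla^2\mathcal{S}\mathbf{1}|^2$; the passage to surface measure is the hard part and requires the extrapolation lemma, not an $N_*$--$S$ equivalence.
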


The precise definitions of the Ahlfors-David regularity, Corkscrew, and Harnack chain conditions are somewhat technical and will be given below (cf. Definitions~\ref{def1.cork}, \ref{def1.hc}, \ref{def1.ADR}). In rough terms, the Ahlfors-David regularity is  a natural ambient assumption, essentially saying that the set $E$ is $n$-dimensional at all scales. The interior Corkscrew and Harnack Chain conditions are scale invariant analogues of the topological properties of openness and path connectedness, respectively.
We shall use the terminology that a connected open set
$\Omega\subset \ree$ is a ``1-sided NTA domain" if it satisfies interior (but not necessarily
exterior) Corkscrew and Harnack Chain conditions.  We remark that, in the presence of
Ahlfors-David regularity, by the result of \cite{DJe},
a ``2-sided" Corkscrew condition (i.e., the existence of a Corkscrew point in two different components of
$\ree\setminus E$ at every scale) would already imply uniform rectifiability, without further conditions.
On the other hand, if $F$ denotes the ``four corners Cantor set" of Garnett, then $F$ is Ahlfors-David regular, and is the boundary of the 1-sided NTA domain $\mathbb{R}^2\setminus F$, but is
totally non-rectifiable.
Moreover, the Ahlfors-David regularity condition alone implies that some component of
$\ree\setminus E$ contains a Corkscrew point, at every scale.
Thus, from the point of view of rectifiability, there is a significant difference between the ``1-sided" and the ``2-sided" cases.

We note that, during the
preparation of this manuscript, we learned that F. Nazarov, X. Tolsa, and A. Volberg
\cite{NToV} have recently obtained a solution to the full conjecture, that is,
they prove Theorem~\ref{theor:main-RT} without the 1-sided NTA assumption. The manuscript of their work is expected to appear soon.  Although the present paper treats the conjecture only in a special case, we believe that our method of proof, based on harmonic measure techniques, may be of independent interest, and may eventually provide an alternative approach to the full conjecture.

As mentioned above, our approach to the proof of Theorem~\ref{theor:main-RT} is different from that of the aforementioned works in the subject, including \cite{NToV}.
We exploit in an essential way harmonicity of the Riesz transform, as well as recent results in \cite{HM-I}, \cite{HMU} which relate properties of harmonic measure to Uniform Rectifiability. In \cite{HM-I},
and jointly with I. Uriarte-Tuero in \cite{HMU}, the first two authors of the present paper have shown that for an Ahlfors-David regular set $E$, which is the boundary of a 1-sided NTA domain, Uniform Rectifiability is equivalent to the
(weak) $A_\infty$ property of harmonic measure.  Both the method of the proof in \cite{HM-I}, \cite{HMU}, involving Extrapolation of Carleson Measures techniques, and the result itself have been employed in the proof of Theorem~\ref{theor:main-RT}.

We mention, without discussing in detail, an extensive list of related results in the subject, including, but not limited to, connections between existence of principal values for Riesz transforms and rectifiability \cite{PVandRect}, different versions of square functions and rectifiability \cite{DS2}, \cite{MV2}, \cite{TolsaUnif}
and a rich array of geometric and analytic characterizations of Uniform Rectifiability in \cite{DS1}, \cite{DS2}.

We pass at this point to a rigorous list of Definitions of the main concepts.

\subsection{Notation and Definitions}

We borrow some notation and definitions from \cite{HM-I} and \cite{HMU}. For the sake of conciseness we will omit some of the background material developed there and the reader is recommended to have both references handy.

\begin{list}{$\bullet$}{\leftmargin=0.4cm  \itemsep=0.2cm}

\item Harmless positive constants will be denoted by $c,C$ and may change at each occurrence. Unless otherwise specified these will depend only on dimension and the ``allowable parameters'',  i.e., the
constants appearing in the hypotheses of Theorem \ref{theor:main-RT}.
We shall also sometimes write $a\lesssim b$ and $a \approx b$ to mean, respectively,
that $a \leq C b$ and $0< c \leq a/b\leq C$, where the constants $c$ and $C$ are as above.

\item Given a domain $\Omega \subset \ree$, we shall
use lower case letters $x,y,z$, etc., to denote points on $\partial \Omega$, and capital letters
$X,Y,Z$, etc., to denote generic points in $\ree$ (especially those in $\ree\setminus \partial\Omega$). We caution the reader that, when working with a subdomain $\Omega'$ of $\Omega$, we shall use
lower case letters to denote points on the boundary of $\Omega'$, even though these may be
interior points of $\Omega$.

\item The open $(n+1)$-dimensional Euclidean ball of radius $r$ will be denoted
$B(x,r)$ when the center $x$ lies on $\partial \Omega$, or $B(X,r)$ when the center
$X \in \ree\setminus \partial\Omega$.  A ``surface ball'' is denoted
$\Delta(x,r):= B(x,r) \cap\partial\Omega.$

\item Given a Euclidean ball $B$, or surface ball $\Delta$, its radius will be denoted
$r_B$ or $r_\Delta$, respectively.

\item Given a Euclidean or surface ball $B= B(X,r)$, or $\Delta = \Delta(x,r)$, its concentric
dilate by a factor of $\kappa >0$ will be denoted
by $\kappa B := B(X,\kappa r)$ or $\kappa \Delta := \Delta(x,\kappa r).$

\item For $X \in \ree$, we set $\delta(X):= \dist(X,\partial\Omega)$.

\item We let $H^n$ denote $n$-dimensional Hausdorff measure, and let
$\sigma := H^n\big|_{\partial\Omega}$ denote the ``surface measure'' on $\partial \Omega$.

\item For a Borel set $A\subset \ree$, we let $1_A$ denote the usual
indicator function of $A$, i.e. $1_A(x) = 1$ if $x\in A$, and $1_A(x)= 0$ if $x\notin A$.

\item For a Borel set $A\subset \ree$,  we let $\interior(A)$ denote the interior of $A$.
If $A\subset \partial\Omega$, then $\interior(A)$ will denote the relative interior, i.e., the largest relatively open set in $\partial\Omega$ contained in $A$.  Thus, for $A\subset \partial\Omega$,
the boundary is then well defined by $\partial A := \overline{A} \setminus {\rm int}(A)$.

\item We shall use the letter $I$ (and sometimes $J$)
to denote a closed $(n+1)$-dimensional Euclidean cube with sides
parallel to the co-ordinate axes, and we let $\ell(I)$ denote the side length of $I$.
We use $Q$ to denote a dyadic ``cube''
on $\partial \Omega$.  The
latter exist, given that $\partial \Omega$ is ADR  (cf. \cite{DS1}, \cite{Ch}), and enjoy certain properties
which we enumerate in Lemma \ref{lemmaCh} below.

\end{list}

\begin{definition} ({\bf Corkscrew condition}).  \label{def1.cork}
Following
\cite{JK}, we say that a domain $\Omega\subset \ree$
satisfies the ``Corkscrew condition'' if for some uniform constant $c>0$ and
for every surface ball $\Delta:=\Delta(x,r),$ with $x\in \partial\Omega$ and
$0<r<\diam(\partial\Omega)$, there is a ball
$B(X_\Delta,cr)\subset B(x,r)\cap\Omega$.  The point $X_\Delta\subset \Omega$ is called
a ``Corkscrew point'' relative to $\Delta.$  We note that  we may allow
$r<C\diam(\pom)$ for any fixed $C$, simply by adjusting the constant $c$.
\end{definition}

\begin{definition}({\bf Harnack Chain condition}).  \label{def1.hc} Again following \cite{JK}, we say
that $\Omega$ satisfies the Harnack Chain condition if there is a uniform constant $C$ such that
for every $\rho >0,\, \Lambda\geq 1$, and every pair of points
$X,X' \in \Omega$ with $\delta(X),\,\delta(X') \geq\rho$ and $|X-X'|<\Lambda\,\rho$, there is a chain of
open balls
$B_1,...,B_N \subset \Omega$, $N\leq C(\Lambda)$,
with $X\in B_1,\, X'\in B_N,$ $B_k\cap B_{k+1}\neq \emptyset$
and $C^{-1}\diam (B_k) \leq \dist (B_k,\partial\Omega)\leq C\diam (B_k).$  The chain of balls is called
a ``Harnack Chain''.
\end{definition}



\begin{definition}\label{def1.ADR}
({\bf Ahlfors-David regular}). We say that a closed set $E \subset \ree$ is $n$-dimensional ADR (or simply ADR) if
there is some uniform constant $C$ such that
\begin{equation} \label{eq1.ADR}
\frac1C\, r^n \leq H^n(E\cap B(x,r)) \leq C\, r^n,\,\,\,\forall r\in(0,R_0),x \in E,\end{equation}
where $R_0$ is the diameter of $E$ (which may be infinite).   When $E=\partial \Omega$,
the boundary of a domain $\Omega$, we shall sometimes for convenience simply
say that ``$\Omega$ has the ADR property'' to mean that $\partial \Omega$ is ADR.
\end{definition}

\begin{definition}\label{def1.UR}
({\bf Uniform Rectifiability}).
We say
that a closed set $E\subset \ree$ is    $n$-dimensional UR (or simply UR) (``Uniformly Rectifiable''), if
it satisfies the ADR condition \eqref{eq1.ADR}, and if for some uniform constant $C$ and for every
Euclidean ball $B:=B(x_0,r), \, r\leq \diam(E),$ centered at any point $x_0 \in E$, we have the Carleson measure estimate
\begin{equation}\label{eq1.sf}\dint_{B}
|\nabla^2 \mathcal{S}1(X)|^2 \,\dist(X,E) \,dX \leq C r^n,
\end{equation}
where $\mathcal{S}f$ is the single layer potential of $f$, i.e.,
\begin{equation}\label{eq1.layer}
\mathcal{S}f(X) :=c_n\, \int_{E} |X-y|^{1-n} f(y) \,dH^n(y).
\end{equation}
Here, the normalizing constant $c_n$ is chosen so that
$\mathcal{E}(X) := c_n |X|^{1-n}$ is the usual fundamental solution for the Laplacian
in $\ree.$  When $E=\partial \Omega$,
the boundary of a domain $\Omega$, we shall sometimes for convenience simply
say that ``$\Omega$ has the UR property'' to mean that $\partial \Omega$ is UR.
\end{definition}

UR sets may be characterized in many ways, and were originally defined
in more explicitly geometrical terms (see \cite{DS1,DS2});  Definition \ref{def1.UR} is obtained as
a characterization of such sets in \cite{DS2}.
As mentioned above, the UR sets
are precisely those for which all ``sufficiently nice'' singular integrals are bounded on $L^2$ (see \cite{DS1}).

\noindent {\it Acknowledgements}.
We thank Ignacio Uriarte-Tuero for bringing the work of Nazarov, Tolsa and Volberg to our attention.  The first named author also thanks Xavier Tolsa for explaining
the proof of the main result of \cite{NToV}.  We are also most grateful to the referee for a careful reading of the manuscript, and for numerous suggestions to improve the exposition.

\section{Proof of Theorem \ref{theor:main-RT}}

\subsection{Preliminaries}

We collect some of the definitions and auxiliary results from \cite{HM-I} that will be used later. The reader is referred to \cite{HM-I} for more details.

\subsubsection{Dyadic Grids and Sawtooth domains}

\begin{lemma}\label{lemmaCh}({\bf Existence and properties of the ``dyadic grid''})
\cite{DS1,DS2}, \cite{Ch} (also \cite{HM-I} for specific notation.)
Suppose that $E\subset \ree$ satisfies
the ADR condition \eqref{eq1.ADR}.  Then
there is a collection of Borel sets (``cubes'')
$$
\dd=\dd(E)=\cup_k \dd_k,
$$
satisfying

\begin{list}{$(\theenumi)$}{\usecounter{enumi}\leftmargin=.8cm
\labelwidth=.8cm\itemsep=0.2cm\topsep=.1cm
\renewcommand{\theenumi}{\roman{enumi}}}

\item $E=\cup_{Q\in\dd_k}Q$ for each
$k\in{\mathbb Z}$ and this union is comprised of disjoint sets.

\item If $Q$, $Q'\in\dd$ and $Q\cap Q'\neq\emptyset$ then either $Q\subset Q'$ or $Q'\subset Q$.

\item For each $Q\in\dd_k$ and each $m<k$, there is a unique
$Q'\in\dd_m$ with $Q\subset Q'$.

\item For each cube $Q\in\dd_k$, set $\ell(Q) = 2^{-k}$. We shall refer to this quantity as the ``length''
of $Q$.  Then,  $\ell(Q)\approx \diam(Q)$ and there is a point $x_Q\in E$, referred to as the ``center'' of $Q$,
a Euclidean ball $B(x_Q,r)$ and a surface ball
$\Delta(x_Q,r):= B(x_Q,r)\cap E$ such that
$r\approx 2^{-k} \approx {\rm diam}(Q)$
and \begin{equation}\label{cube-ball}
\Delta(x_Q,r)\subset Q \subset \Delta(x_Q,Cr),\end{equation}
for some uniform constant $C$.
We shall denote the respective balls by
\begin{equation}\label{cube-ball2}
B_Q:= B(x_Q,r) \,,\qquad\Delta_Q:= \Delta(x_Q,r).
\end{equation}
\end{list}
\end{lemma}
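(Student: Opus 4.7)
The approach is to carry out the standard construction of ``dyadic cubes'' on the space of homogeneous type $(E,|\cdot|,H^n)$, in the style of David and Christ. For each $k\in\mathbb{Z}$ I would first select a maximal $2^{-k}$-separated net $X_k=\{x_\alpha^k\}_\alpha\subset E$. Maximality and separation respectively yield the covering
\[
E\,\subset\,\bigcup_\alpha \Delta(x_\alpha^k,2^{-k})
\]
and the pairwise disjointness of the half-sized surface balls $\Delta(x_\alpha^k,2^{-k-1})$. Next, I would impose a tree structure on the indexed collection $(X_k)_k$: for each $x_\alpha^{k+1}$, the covering property at level $k$ supplies some index $\beta=p(\alpha)$ with $|x_\alpha^{k+1}-x_\beta^k|\le 2^{-k}$, which I declare to be the \emph{parent} of $x_\alpha^{k+1}$. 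Iterating, each $x_\alpha^k$ acquires a unique ancestor at every coarser level $m<k$.

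Second, I would transfer this tree to a nested family of cubes. A convenient route is to fix an auxiliary fine level $N$, build an initial partition $\{\widehat Q_\gamma^N\}_\gamma$ of $E$ by Voronoi cells of $X_N$ (with a deterministic tie-breaking rule), and for each $k\le N$ set
\[
Q_\alpha^k \,:=\, \bigcup\bigl\{\widehat Q_\gamma^N:\text{the level-}k\text{ ancestor of }x_\gamma^N \text{ equals } x_\alpha^k\bigr\}.
\]
Passing to the limit $N\to\infty$ along a diagonal subsequence (equivalently, coding each point of $E$ by its full sequence of ancestors) produces a consistent family $\mathbb{D}=\bigcup_k\mathbb{D}_k$, with properties (i)--(iii) built in. For (iv), any $y\in Q_\alpha^k$ can be linked to $x_\alpha^k$ by the telescoping chain of ancestors of the Voronoi cell containing $y$, each step of length at most $2^{-m}$ at level $m\ge k$, so the geometric series gives $|y-x_\alpha^k|\le C\,2^{-k}$. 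Conversely, if $|y-x_\alpha^k|<c\,2^{-k}$ with $c$ small, then $x_\alpha^k$ is the strictly closest member of $X_k$ to $y$ by a margin that persists under all finer-scale refinements, so $y\in Q_\alpha^k$. The relation $\ell(Q)\approx\diam(Q)$ then follows from (iv) together with the ADR lower mass bound.

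The main obstacle is reconciling, uniformly across scales, the nesting property (ii) with the inner-ball inclusion in (iv): plain Voronoi cells are not intrinsically nested, while a naive enforcement of nesting tends to erode the inner ball through accumulated boundary adjustments at finer levels. Christ's resolution handles this via a carefully weighted limiting procedure with scale-dependent ``buffer'' constants, and the ADR hypothesis enters precisely to control the $H^n$-measure of these buffer regions and to ensure that the inner-ball inclusion survives the limit with a uniform constant $c$. Verifying this delicate balance of constants is the technical core, but the resulting statement is robust under the choice of implementation, and we simply quote it from \cite{DS1,DS2,Ch}.
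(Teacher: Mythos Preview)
The paper does not actually prove this lemma: it states it with attribution to \cite{DS1,DS2} and \cite{Ch}, and follows only with a few clarifying remarks (in particular, that in the ADR setting one may take the dyadic parameter equal to $1/2$). Your proposal goes further than the paper does, sketching the Christ-style construction before ultimately deferring to the same references; in that sense your approach and the paper's are aligned, since both treat the result as a citation.

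Your outline of the construction is broadly correct. One small imprecision worth flagging: the survival of the inner-ball inclusion and the control of the ``buffer'' (thin-boundary) regions in Christ's argument require only that the underlying measure be doubling, not the full two-sided ADR bound. In the present paper the specific role of ADR, as noted in the remarks following the lemma, is to justify taking the scaling factor to be exactly $1/2$ rather than some unspecified $\delta\in(0,1)$.
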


\bigskip

A few remarks are in order concerning this lemma.

\begin{list}{$\bullet$}{\leftmargin=0.4cm  \itemsep=0.2cm}

\item In the setting of a general space of homogeneous type, this lemma has been proved by Christ
\cite{Ch}.  In that setting, the
dyadic parameter $1/2$ should be replaced by some constant $\delta \in (0,1)$.
It is a routine matter to verify that one may take $\delta = 1/2$ in the presence of the Ahlfors-David
property (\ref{eq1.ADR}) (in this more restrictive context, the result already appears in \cite{DS1,DS2}).

\item  For our purposes, we may ignore those
$k\in \mathbb{Z}$ such that $2^{-k} \gtrsim {\rm diam}(E)$, in the case that the latter is finite.

\item Let us now specialize to the case that  $E=\pom$, with $\Omega$ satisfying the Corkscrew condition.  Given $Q\in \mathbb{D}(\partial\Omega)$,
we
shall sometimes refer to a ``Corkscrew point relative to $Q$'', which we denote by
$X_Q$, and which we define to be the corkscrew point $X_\Delta$ relative to the ball
$\Delta:=\Delta_Q$ (cf. \eqref{cube-ball}, \eqref{cube-ball2} and Definition \ref{def1.cork}).  We note that
\begin{equation}\label{eq1.cork}
\delta(X_Q) \approx \dist(X_Q,Q) \approx \diam(Q).
\end{equation}

\item For a dyadic cube $Q \in \mathbb{D}$, we let $k(Q)$ denote the ``dyadic generation''
to which $Q$ belongs, i.e., we set  $k = k(Q)$ if
$Q\in \mathbb{D}_k$; thus, $\ell(Q) =2^{-k(Q)}$.

\end{list}

\bigskip

We next introduce some ``discretized'' and ``geometric'' sawtooth and Carleson regions from \cite[Section 3]{HM-I}.
Given a ``dyadic cube'' $Q\in \dd(\partial\Omega)$, the {\bf discretized Carleson region} $\dd_Q$ is defined to be
\begin{equation}\label{eq2.discretecarl}
\dd_Q:= \left\{Q'\in \dd: Q'\subset Q\right\}.
\end{equation}
Given a family $\mathcal{F}$ of disjoint cubes $\{Q_j\}\subset \mathbb{D}$, we define
the {\bf global discretized sawtooth} relative to $\F$ by
\begin{equation}\label{eq2.discretesawtooth1}
\dd_{\F}:=\dd\setminus \bigcup_{\F} \dd_{Q_j}\,,
\end{equation}
i.e., $\dd_{\F}$ is the collection of all $Q\in\dd$ that are not contained in any $Q_j\in\F$.
Given some fixed cube $Q$,
the {\bf local discretized sawtooth} relative to $\F$ by
\begin{equation}\label{eq2.discretesawtooth2}
\dd_{\F,Q}:=\dd_Q\setminus \bigcup_{\F} \dd_{Q_j}=\dd_\F\cap\dd_Q.
\end{equation}

We also introduce the ``geometric'' Carleson regions and sawtooths. In the sequel, $\Omega \subset \ree,\, n\geq 2,$ will be a 1-sided NTA domain with ADR boundary. Let $\mathcal{W}=\W(\Omega)$ denote a collection
of (closed) dyadic Whitney cubes of $\Omega$, so that the cubes in $\mathcal{W}$
form a covering of $\Omega$ with non-overlapping interiors,  which satisfy
\begin{equation}\label{eqWh1} 4\, {\rm{diam}}\,(I)\leq \dist(4 I,\pom) \leq  \dist(I,\pom) \leq 40 \, {\rm{diam}}\,(I)\end{equation}
and also
\begin{equation}\label{eqWh2}\diam(I_1)\approx \diam(I_2), \mbox{ whenever $I_1$ and $I_2$ touch.}\end{equation}
Let $X(I)$ denote the center of $I$ and let $\ell(I)$ denote the side length of $I$,
and write $k=k_I$ if $\ell(I) = 2^{-k}$.

Given $0<\lambda<1$ and $I\in\W$ we write $I^*=(1+\lambda)I$ for the ``fattening'' of $I$. By taking $\lambda$ small enough,  we can arrange matters so that, first, $\dist(I^*,J^*) \approx \dist(I,J)$ for every
$I,J\in\W$, and secondly, $I^*$ meets $J^*$ if and only if $\partial I$ meets $\partial J$
(the fattening thus ensures overlap of $I^*$ and $J^*$ for
any pair $I,J \in\W$ whose boundaries touch, so that the
Harnack Chain property then holds locally, with constants depending upon $\lambda$, in $I^*\cup J^*$).  By choosing $\lambda$ sufficiently small,
we may also suppose that there is a $\tau\in(1/2,1)$ such that for distinct $I,J\in\W$, $\tau J\cap I^* = \emptyset$.

For every $Q$ we can construct a family $\W_Q^*\subset \W$ and define
\begin{equation}\label{eq2.whitney3}
U_Q := \bigcup_{I\in\,\mathcal{W}^*_Q} I^*\,,
\end{equation}
satisfying the following properties:
$X_Q\in U_Q$ and there are uniform constants $k^*$ and $K_0$ such that
\begin{eqnarray}\label{eq2.whitney2}
& k(Q)-k^*\leq k_I \leq k(Q) + k^*\,, \quad \forall I\in \mathcal{W}^*_Q\\\nonumber
&X(I) \rightarrow_{U_Q} X_Q\,,\quad\forall I\in \mathcal{W}^*_Q\\ \nonumber
&\dist(I,Q)\leq K_0\,2^{-k(Q)}\,, \quad \forall I\in \mathcal{W}^*_Q\,.
\end{eqnarray}
Here $X(I) \rightarrow_{U_Q} X_Q$ means that the interior of $U_Q$ contains all the balls in
a Harnack Chain (in $\Omega$), connecting $X(I)$ to $X_Q$, and  moreover, for any point $Z$ contained
in any ball in the Harnack Chain, we have $
\dist(Z,\pom) \approx \dist(Z,\Omega\setminus U_Q)\,,
$
with uniform control of the implicit constants.
The constants  $k^*$, $K_0$ and the implicit constants in the condition $X(I)\to_{U_Q} X_Q$ in \eqref{eq2.whitney2}
depend only  on the ``allowable
parameters'' and on $\lambda$. The reader is refereed to \cite{HM-I} for full details.

We may then define  the {\bf Carleson box}
associated to $Q$ by
\begin{equation}\label{eq2.box}
T_Q:={\rm int}\left( \bigcup_{Q'\in \dd_Q} U_{Q'}\right).
\end{equation}
Similarly, we may define geometric sawtooth regions as follows.
As above, given a family $\mathcal{F}$ of disjoint cubes $\{Q_j\}\subset \mathbb{D}$,
we define the {\bf global sawtooth} relative to $\mathcal{F}$ and the {\bf local sawtooth} relative to $\mathcal{F}$
for some fixed $Q\in \dd$ by
\begin{equation}\label{eq2.sawtooth1}
\Omega_{\mathcal{F}}:= {\rm int } \left( \bigcup_{Q'\in\dd_\F} U_{Q'}\right)\,,
\qquad
\Omega_{\mathcal{F},Q}:=  {\rm int } \left( \bigcup_{Q'\in\dd_{\F,Q}} U_{Q'}\right)\,.
\end{equation}

\medskip

Analogously we can define fattened versions of the aforementioned regions as follows:
\begin{equation}\label{UQ-fat} U_Q^{fat} := \bigcup_{I\in\,\mathcal{W}^*_Q} 4I,\quad
\Omega^{fat}_{\mathcal{F}}:=  {\rm int } \left( \bigcup_{Q'\in\dd_{\F}} U^{fat}_{Q'}\right),
\quad
T^{fat}_{Q}:=  {\rm int } \left( \bigcup_{Q'\in\dd_Q} U^{fat}_{Q'}\right),
\end{equation}
and, similarly,
\begin{equation}\label{UQ-fat*} U_Q^{fat*} := \bigcup_{I\in\,\mathcal{W}^*_Q} 5I,\quad
\Omega^{fat*}_{\mathcal{F}}:=  {\rm int } \left( \bigcup_{Q'\in\dd_{\F}} U^{fat*}_{Q'}\right).\end{equation}

Let us also introduce a non-tangential maximal function: for every $x\in\pom$ we define
\begin{equation}\label{eqNTdef}
N_* u(x)=\sup_{X\in\Gamma(x)}|u(X)|,
\qquad
\Gamma(x)=\bigcup_{x\in Q\in\dd} U^{fat*}_{Q}.
\end{equation}
Note that the sets $U^{fat*}_{Q}$ are slightly fatter than $U^{fat}_{Q}$, which, in a sense, enlarges aperture of underlying cones.

A domain $\Omega$ is said to satisfy the \textbf{qualitative exterior Corkscrew} condition if there exists $N\gg 1$ such that $\Omega$ has exterior corkscrew points at all scales smaller than $2^{-N}$. That is, there exists a constant $c_N$ such that for every surface ball $\Delta=\Delta(x,r)$, with $x\in \pom$  and $r\le 2^{-N}$, there is a ball $B(X_\Delta^{ext},c_N\,r)\subset B(x,r)\cap\Omega_{ext}$,  where $\Omega_{ext}:= \ree\setminus \overline{\Omega}$.
Let us observe that if
$\Omega$ satisfies the qualitative exterior
Corkscrew condition, then every point in $\pom$ is regular in the sense of Wiener.
Moreover, for $1$-sided NTA domains, the qualitative
exterior Corkscrew points allow local H\"{o}lder continuity
at the boundary (albeit with bounds which may depend badly on $N$).
In the sequel, we shall work with certain approximating domains, for which
the qualitative exterior Corkscrew condition holds (cf. subsections \ref{sub2.2}-\ref{sub2.3} below).  Of course, none of our estimates will depend
quantitatively on this condition.

\medskip

\begin{lemma}[{\cite[Lemmata 3.61 and 3.62]{HM-I}}]\label{lemma2.30}  Suppose that $\Omega$ is a  1-sided NTA domain
with an ADR boundary.
Then all of its  Carleson boxes
$T_Q$, and sawtooth regions
$ \Omega_{\mathcal{F}}$, $\Omega_{\mathcal{F},Q}$ are also 1-sided NTA domains
with ADR boundaries.
In all cases, the implicit constants are uniform, and
depend only on dimension and on the corresponding constants for $\Omega$.

Furthermore, if $\Omega$ also satisfies the qualitative exterior Corkscrew condition, then all of its  Carleson boxes
$T_Q$ and sawtooth regions $ \Omega_{\mathcal{F}}$, $\Omega_{\mathcal{F},Q}$ satisfy the qualitative exterior
Corkscrew condition.
\end{lemma}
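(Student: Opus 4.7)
The plan is to argue all three conclusions simultaneously by exploiting the Whitney-cube structure underlying $T_Q$, $\Omega_{\F}$ and $\Omega_{\F,Q}$. In each case the sawtooth region is built as a union of fattened Whitney cubes $I^*$ indexed by a coherent subcollection of $\dd = \dd(\pom)$, so its boundary decomposes into a \emph{bottom} portion contained in $\pom$ and a \emph{side/top} portion lying along unions of faces of Whitney cubes, at scales comparable to $\dist(\cdot,\pom)$ on those faces. The rest of the proof consists of systematically checking, on each of these two pieces, the Corkscrew, Harnack chain, and ADR conditions for the sawtooth, using the hypotheses on $\Omega$ itself together with the geometric data in \eqref{eqWh1}--\eqref{eq2.whitney2}.

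I would first establish the ADR property for $\partial T_Q$. The upper bound follows from covering $\partial T_Q$ by a bounded-overlap family of faces of Whitney cubes $I\in\mathcal{W}^*_{Q'}$ with $Q'\in\dd_Q$: each such $I$ contributes $O(\ell(I)^n)$ surface, and counting these together with the ADR bound on the bottom piece $\partial T_Q \cap \pom$ gives the desired upper bound. The lower bound splits by cases at $x\in\partial T_Q$ and scale $r\leq\diam(T_Q)$: if $\dist(x,\pom)\lesssim r$, the ADR of $\pom$ delivers a piece of measure $\gtrsim r^n$ inside $\partial T_Q\cap B(x,r)$; if $\dist(x,\pom)\gg r$, then $x$ lies on a face of some Whitney cube $I$ with $\ell(I)\approx\dist(x,\pom)$, and the intersection of $\partial T_Q$ with $B(x,r)$ contains a flat piece of that face of $n$-dimensional measure $\approx r^n$, thanks to the disjointness encoded by the fattening parameter $\tau$.

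The interior Corkscrew condition for $T_Q$ and for the sawtooths is then nearly immediate: given a surface ball on $\partial T_Q$ at a point $x$ and scale $r$, one picks a Whitney cube $I$ in the admissible collection with $x$ near $I$ and $\ell(I)\approx \min(r,\ell(Q))$; the center of $I^*$ then serves as the Corkscrew point, and the containment $I^*\subset T_Q$ together with $\dist(I,\pom)\approx \ell(I)$ yields the required lower bound on the distance to $\partial T_Q$. The Harnack chain condition is the principal obstacle: a Harnack chain in $\Omega$ joining $X,X'\in T_Q$, produced by the 1-sided NTA hypothesis on $\Omega$, may dip too close to $\partial T_Q\setminus\pom$, that is, too close to a Whitney face, causing its balls to become incomparable to $\dist(\cdot,\partial T_Q)$. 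The remedy is to re-route the chain \emph{through} the underlying Whitney cubes: between any two touching Whitney cubes $I,J$ in the admissible collection, the overlap $I^*\cap J^*$ furnishes a short local Harnack chain with controlled radii; moving between Whitney cubes that do not touch but lie at a comparable scale is handled by first applying the Harnack chain in $\Omega$ at that scale (which automatically stays far from $\pom$ and can be kept far from Whitney faces of comparable size) and then re-entering the appropriate $I^*$'s. The combinatorial bounds in \eqref{eq2.whitney2} and the coherence of $\dd_Q$ (respectively $\dd_\F$, $\dd_{\F,Q}$) ensure that the resulting chain has length depending only on $\Lambda$ and the allowable parameters.

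Finally, for the qualitative exterior Corkscrew condition under the additional hypothesis, the boundary of the sawtooth is again split: at a boundary point lying on $\pom$ the exterior Corkscrew of $\Omega$ itself works at any scale below $2^{-N}$; at a boundary point lying on a Whitney face of some $I^*$ with $I\in\mathcal{W}^*_{Q'}$, an adjacent Whitney cube $J\subset\Omega$ that is \emph{not} included in the sawtooth provides an exterior Corkscrew inside $\tau J$, which is separated from $I^*$ by the choice of $\lambda$ and $\tau$. The sawtooth cases $\Omega_\F$ and $\Omega_{\F,Q}$ are handled identically, with $\dd_Q$ replaced by $\dd_\F$ or $\dd_{\F,Q}$; the only extra point to verify is coherence across the ``floor'' cubes $Q_j\in\F$, which is immediate from the definitions of $\dd_\F$ and $\dd_{\F,Q}$ in \eqref{eq2.discretesawtooth1}--\eqref{eq2.discretesawtooth2}.
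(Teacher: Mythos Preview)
The paper does not prove this lemma; it is stated with a citation to \cite[Lemmata 3.61 and 3.62]{HM-I} and used as a black box, so there is no ``paper's own proof'' to compare against. Your sketch follows the natural strategy one expects (and, indeed, the one carried out in \cite{HM-I}): decompose the sawtooth boundary into the portion on $\pom$ and the portion made of Whitney faces, and verify ADR, interior Corkscrew, and Harnack chain separately on each piece, exploiting the Whitney geometry \eqref{eqWh1}--\eqref{eq2.whitney2}.

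A few cautions about places where your outline is thinner than the actual argument in \cite{HM-I}. First, for the Harnack chain you appeal to ``re-routing through Whitney cubes,'' but the reason this works is precisely the built-in property $X(I)\to_{U_Q} X_Q$ in \eqref{eq2.whitney2}: the collections $\W_Q^*$ are \emph{constructed} (not merely Whitney cubes near $Q$) so that a Harnack chain from any $X(I)$ to $X_Q$ already lives inside $U_Q$ with the right distance control; your sketch should invoke this rather than attempt an ad hoc re-routing. Second, for the interior Corkscrew in the sawtooth case $\Omega_{\F,Q}$, when $x\in\partial\Omega_{\F,Q}$ lies near the ``floor'' over some removed $Q_j\in\F$, the Whitney cube you select at scale $\approx r$ must belong to some $\W_{Q'}^*$ with $Q'\in\dd_{\F,Q}$; this requires a small argument (pick $Q'$ the dyadic parent chain of $Q_j$ at the correct scale) that you have not written. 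Third, in the qualitative exterior Corkscrew paragraph, your claim that an adjacent $J\notin$ sawtooth furnishes an exterior Corkscrew inside $\tau J$ works at scales $\lesssim\ell(J)$, which suffices since the qualitative condition only asks for scales $\le 2^{-N}$ and $\ell(J)\gtrsim 2^{-N}$ along the Whitney faces of an approximating domain; you should say this explicitly. None of these are gaps in the sense of a wrong idea, but each would need to be filled in for a self-contained proof.
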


\subsubsection{Harmonic measure and Green function}

In the sequel, $\Omega \subset \ree,\, n\geq 2,$ will be a connected, open set, $\omega^X$ will denote
harmonic measure for $\Omega$, with pole at $X$. At least in the case that $\Omega$ is bounded, we may, as usual,
define $\omega^X$ via the maximum principle and the Riesz representation theorem,
after first using the method of Perron (see, e.g., \cite[pp. 24--25]{GT}) to construct a harmonic function ``associated'' to arbitrary continuous boundary
data.\footnote{Since we have made no assumption as regards
Wiener's regularity criterion, our harmonic function is a generalized solution, which
may not be continuous up to the boundary.}  For unbounded $\Omega$, we may still
define harmonic measure via a standard approximation scheme, see, e.g.,
\cite[Section 3]{HM-I} for more details.  We note for future reference that $\omega^X$
is a non-negative, finite, outer regular Borel measure.

By a result proved by Bourgain \cite{B} if $\partial \Omega$ is $n$-dimensional ADR, then there exist uniform constants $c\in(0,1)$
and $C\in (1,\infty)$,
such that for every $x \in \partial\Omega$, and every $r\in (0,\diam(\partial\Omega))$,
\begin{equation}\label{eq2.Bourgain1}
\omega^{Y} (\Delta(x,r)) \geq 1/C>0,
\qquad
Y \in \Omega \cap B(x,cr)
 \;.
\end{equation}
In particular, if $\Omega$ is a 1-sided NTA domain
then for every surface ball $\Delta$, we have
\begin{equation}\label{eq2.Bourgain2}
\omega^{X_\Delta} (\Delta) \geq 1/C>0 \;.
\end{equation}

The Green function is constructed by setting
\begin{equation}\label{eq2.greendef}
G(X,Y):= \mathcal{E}\,(X-Y) - \int_{\partial\Omega}\mathcal{E}\,(X-z)\,d\omega^Y(z),
\end{equation}
where $\mathcal{E}\,(X):= c_n |X|^{1-n}$ is the usual fundamental solution for the Laplacian
in $\ree$.  We choose the normalization that makes $\mathcal{E}$ positive. In such a way, the Green function satisfies the following standard
properties (see, e.g.,  \cite[Lemma 3.11]{HM-I}:

\begin{list}{$\bullet$}{\leftmargin=0.6cm  \itemsep=0.2cm}

\item $G(X,Y) \leq C\,|X-Y|^{1-n}$.

\item $G(X,Y)\ge c(n,\theta)\,|X-Y|^{1-n}$, if $|X-Y|\leq \theta\, \delta(X)$, $\theta \in (0,1)$.

\item If every point on $\pom$  is regular in the sense of Wiener, then $G(X,Y)\geq 0$ and $G(X,Y)=G(Y,X)$ for all $X,Y\in\Omega$, $X\neq Y$.

\end{list}

On the other hand, if $\Omega$ is a 1-sided NTA domain with $n$-dimensional ADR boundary such that $\Omega$ satisfies the qualitative exterior corkscrew condition we have the following
Caffarelli-Fabes-Mortola-Salsa estimates (see \cite[Lemma 3.30]{HM-I}):
Given $B_0:=B(x_0,r_0)$  with $x_0\in\pom$, and $\Delta_0 := B_0\cap\partial\Omega$, let $B:=B(x,r)$,  $x\in \pom$, and
$\Delta:=B\cap\pom$, and suppose that $2B\subset B_0.$  Then
\begin{equation}\label{CFMS}
\frac1C \frac{\omega^X(\Delta)}{\sigma(\Delta)}
\le
\frac{G(X_\Delta,X)}{r} \leq C \frac{\omega^X(\Delta)}{\sigma(\Delta)},
\qquad X\in\Omega\setminus B_0.
\end{equation}
The constant $C$ depends {\bf only} on dimension and on the constants in the $ADR$ and 1-sided NTA
conditions.  As a consequence of \eqref{CFMS} we obtain as usual that harmonic measure is doubling (see \cite[Corollary 3.36]{HM-I}). More precisely,
\begin{equation}\label{doubling}
\omega^X(2\Delta)\leq C\omega^X(\Delta),
\qquad X\in \Omega\setminus 4B,
\end{equation}
for every surface ball $\Delta=B\cap\pom$, where as before $C$ depends {\bf only} on dimension and on the constants in the $ADR$ and 1-sided NTA
conditions.

\begin{remark}
Let us emphasize that although we have assumed that $\Omega$ satisfies the qualitative exterior corkscrew condition, the constants in \eqref{CFMS} and \eqref{doubling} do not depend on that qualitative assumption (i.e., constants do not depend on $N$). This is because the argument in \cite{HM-I}  to obtain the left hand side inequality of \eqref{CFMS} uses this qualitative assumption (with parameter $N$) to know that it does \textit{a priori}  hold with a finite constant which may depend very badly on the parameter $N$. The fact that the constant is finite allows then to run a hiding argument to eventually show the desired estimate holds with a uniform bound. We refer the reader to \cite{HM-I} for full details.

\end{remark}

\begin{remark}\label{remark:inherit}
Given a 1-sided NTA domain $\Omega$ with ADR boundary satisfying the qualitative exterior corkscrew condition, we know by Lemma \ref{lemma2.30}, that all these properties are inherited by any
Carleson box $T_Q$, global sawtooth $ \Omega_{\mathcal{F}}$ or local sawtooth $\Omega_{\mathcal{F},Q}$. Furthermore, the implicit constants depend only on dimension and the original constants corresponding to $\Omega$. Thus, \eqref{CFMS} and \eqref{doubling} hold for the harmonic measures and Green functions associated to each of the previous domains with uniform bounds.
\end{remark}

\subsection{Scheme of the proof}\label{sub2.2}

Let $\Omega$ be a 1-sided NTA domain with ADR boundary such  that \eqref{main-RT} holds with $E=\pom$. As observed above, having in addition the qualitative exterior corkscrew condition gives us a much richer and friendlier environment. To explore this, we shall first transfer the Riesz transforms bounds from $\Omega$ to some approximating domains $\Omega_N$, which are $1$-sided NTA domains further satisfying the qualitative exterior corkscrew condition. In doing that we will be able to show that all bounds are uniform on $N$. This is Step 1 in the proof and the argument is essentially contained in \cite[Appendix C]{HM-I}. Once we are in this nicer setting, we will establish in Step 2 that boundedness of the Riesz transform implies uniform rectifiability for each $\Omega_N$, with the UR constants uniform in $N$. That is the heart of the proof and our main contribution. Finally, in Step 3 we will transfer the UR property from $\Omega_N$ to $\Omega$ using an argument in \cite{HMU}.

\subsection{Step 1: Passing to the approximating domains}\label{sub2.3}

We define approximating domains as follows.
For each large integer $N$,  set $\F_N := \dd_N$.  We then let
$\Omega_N := \Omega_{\F_N}$ denote the usual (global) sawtooth with respect to the family
$\F_N$ (cf. \eqref{eq2.whitney2}, \eqref{eq2.whitney3} and \eqref{eq2.sawtooth1}.)  Thus,
\begin{equation}\label{eq7.on}
\Omega_N =\interior\left(\bigcup_{Q\in \dd:\,\ell(Q)\geq 2^{-N+1}}U_Q\right),
\end{equation}
so that $\overline{\Omega_N}$ is the union of fattened Whitney cubes $I^*=(1+\lambda)I$, with $\ell(I)\gtrsim 2^{-N}$, and the boundary of $\Omega_N$ consists of portions of faces of $I^*$ with $\ell(I)\approx 2^{-N}$.
By virtue of Lemma \ref{lemma2.30}, each $\Omega_N$ satisfies the ADR,
Corkscrew and Harnack Chain properties.  We note that, for each of these properties, the constants are uniform in $N$, and depend only on dimension and on the corresponding constants for $\Omega$.

By construction $\Omega_N$ satisfies the qualitative exterior corkscrew condition since it has exterior corkscrew points at
all scales $\lesssim 2^{-N}$.  By Lemma \ref{lemma2.30}  the same statement applies to the Carleson boxes $T_Q$, and to the sawtooth domains $\Omega_\F$ and $\Omega_{\F,Q}$  (all of them relative to $\Omega_N$)  and even to Carleson boxes within sawtooths.

We note that \eqref{main-RT} implies that there exists a constant $C_E'$ depending {\bf only} on dimension, the constants in the $ADR$ and 1-sided NTA
conditions of $\Omega$ and the constant $C_E$ in \eqref{main-RT} such that for all $N\gg 1$ we have
\begin{equation}\label{main-RT:OmegaN}
\sup_{\eps>0}\int_{\pom_N}\left|\int_{\{y\in\pom_N:|x-y|>\eps\}}\frac{x-y}{|x-y|^{n+1}} \,f(y)\, dH^n(y)\right|^2 dH^n(x) \,
\leq \,C_E'
\int_{\pom_N}|f|^2 dH^n\,.
\end{equation}
This fact is proved in \cite[Appendix C]{HM-I}, using ideas of Guy David.
To avoid possible confusion, we point out that the result in \cite[Appendix C]{HM-I} states that the UR property may be inferred, uniformly, for the subdomains $\Omega_N$, given that it holds for $\Omega$, but the proof actually shows that the $L^2$ boundedness of any given singular integral operator, on $\pom$,
may be transferred uniformly to $\pom_N$.  Moreover, in \cite[Appendix C]{HM-I}, one works with smoothly truncated singular integrals, but the error between these and the sharp truncations considered in \eqref{main-RT:OmegaN} is controlled by the Hardy-Littlewood maximal function.

\subsection{Step 2: The heart of the proof}

Having established Step 1, we are in a position  to show that \eqref{main-RT:OmegaN} implies uniform rectifiability of $\Omega_N$ with bounds uniform in $N$. In the last step we shall prove that this ultimately implies that $\Omega$ inherits the UR property.

Let us recall that, by virtue of Lemma \ref{lemma2.30}, $\Omega_N$ is a 1-sided NTA domain with ADR boundary and all the NTA and ADR constants are independent of $N$. Moreover, $\Omega_N$ satisfies the qualitative exterior corkscrew condition and thus, \eqref{CFMS}, \eqref{doubling} hold with constants which are, once again, independent of $N$. By assumptions and Step 1,
we have also \eqref{main-RT:OmegaN}, that is,  the boundedness of Riesz transforms (in $\pom_N$) with bounds that do not depend on $N$.

Therefore, abusing the notation, in this step we can drop the index 
$N$ everywhere and write $\Omega$ to denote the corresponding approximating domain $\Omega_N$ which is 1-sided NTA with ADR boundary and satisfies the qualitative exterior corkscrew condition. Our main assumption is then
\begin{equation}\label{main-RT-approx}
\sup_{\eps>0}\int_{\pom}\left|\int_{\{y\in \pom:|x-y|>\eps\}}\frac{x-y}{|x-y|^{n+1}} \,f(y)\, dH^n(y)\right|^2 dH^n(x) \,\leq \,C_{\pom}
\int_{\pom}|f|^2 dH^n\,.
\end{equation}
We warn the reader that in this step the dyadic grid, sawtooth regions, Carleson boxes, etc., are with respect to $\Omega$ (which now equals $\Omega_N$) since this is our ambient space.

We proceed as in \cite{HM-I}, with a variation that incorporates harmonic measure
in certain places in lieu of surface measure.  We have already introduced some notation from \cite{HM-I}. However the reader may find convenient to have \cite[Sections 5, 6, 7, 8]{HM-I} handy for more details.

First, recalling the definition of the non-tangential maximal operator
\eqref{eqNTdef}, we note that the $L^2$ boundedness of the Riesz transform stated in \eqref{main-RT-approx} implies the $L^2$ boundedness of the operator  $f\mapsto N_*(\nabla\s f)$.
This estimate is a standard consequence of Cotlar's inequality for maximal singular integrals,
and we omit the proof.

Now fix a dyadic cube $Q_0$.
Define a local dyadic maximal function
$$M^{\rm dyadic}_{Q_0}f(x):= \sup_{Q:x\in Q\in\dd_{Q_0}}\fint_Q |f|\,d\sigma\,.$$

Recall that there exists a constant $K\ge 2$ depending on the ADR and 1-sided NTA constants of $\Omega$ only such that  for any $Q$ there is a ball  $B^*_{Q}$ of radius $K\ell(Q)$ such that $T_{Q}^{fat}\subset \frac14 B^*_{Q}$. We also denote by  $B^{**}_{Q_0}$ a fattened ball of radius $K'\ell(Q_0)$, with the value of $K'\geq K$, depending on the ADR and 1-sided NTA constants of $\Omega$ only, to be specified below.
Write $\Delta_0^*=B^{**}_{Q_0}\cap\pom$.
For $M$ a large constant to be chosen, set 
\begin{equation}\label{oodef}
\oo_0:=\{x\in Q_0: M^{\rm dyadic}_{Q_0}\left(N_*\left(\nabla \s 1_{2\Delta_0^*}\right)\right)(x)>M\}\,.
\end{equation}
Let us denote  by
$\F_0:=\{Q_j^0\}\subset \dd_{Q_0}$ the maximal family of disjoint cubes such that $\oo_0 =\cup_{\F_0}Q_j^0$. Note that  by the $L^2$ boundedness
of the operator $f\to N_*(\nabla\s f)$,
for $M$ chosen large enough we have
\begin{equation}\label{ample-sawtooth}
\sigma(Q_0\setminus \oo_0)=\sigma\big(Q_0\setminus \cup_{\F_0} Q_j^0\big)\ge
\big(1-C\,M^{-2}\big)\,\sigma(Q_0)=:
(1-\alpha)\,\sigma(Q_0)
\end{equation}
with $0<\alpha\ll 1$. We also note that by the maximality of the dyadic cubes in $\F_0$ and the Lebesgue differentiation theorem one can easily show that for every $Q\in\dd_{\F_0,Q_0}$
there exists $F_Q\subset Q$ with $\sigma(F_Q)>0$, such that
\begin{equation}\label{FQ}
N_*\left(\nabla \s 1_{2\Delta_0^*}\right)(y)\le M, \quad \forall\,y\in F_Q.
\end{equation}
This estimate implies that $|\nabla \s 1_{2\Delta_0^*}(X)|\leq M$ for all $X$ in the union of the ``cones'' $\Gamma(y)$ with $y\in F_Q$ and $Q\in\dd_{\F_0, Q_0}$. This covers a big portion of $X\in\Omega^{fat*}_{\F_0} \cap B^{**}_{Q_0}$ but it is not all of it (take for instance $X$ with $\delta(X)\ll \ell(Q_0)$ and $\dist(X,Q_0)\approx \ell(Q_0)$). This is a problem when obtaining the key Carleson estimate in Lemma \ref{lemma:Carleson}.
In order to solve this issue we are going to augment the family $\F_0$ by adding some neighbors of $Q_0$. Namely,  fix $\NN, \tau\gg 1$, to be chosen, and write
$$
\mathfrak{C}_{Q_0}
=
\big\{
Q\in\dd: \ell(Q)\le 2^{\NN}\ell(Q_0),\ Q\cap Q_0=\emptyset,\ Q^\NN\cap \tau B_{Q_0}^{**}\neq\emptyset
\big\}
$$
where $Q^\NN$ is the unique dyadic cube containing $Q$ with $\ell(Q)=2^\NN\ell(Q_0)$. We then let $\F_{\mathfrak{C}_{Q_0}}$ denote the collection of maximal cubes of $\mathfrak{C}_{Q_0}$. These cubes do not meet $Q_0$ by definition and therefore $\F_1:=\F_0\cup \F_{\mathfrak{C}_{Q_0}}$ is a family of pairwise disjoint cubes. Consider $\Omega^{fat}_{\F_1}$ and $\Omega^{fat*}_{\F_1}$ and note that
$$
\sigma(\partial \Omega^{fat}_{\F_1}\cap Q_0)=\sigma(\partial \Omega^{fat*}_{\F_1}\cap Q_0)
=
\sigma\big(Q_0\setminus \cup_{\F_0} Q_j^0\big)\ge (1-\alpha)\,\sigma(Q_0),
$$
that is, the sawtooth regions $\Omega^{fat}_{\F_1}$, $\Omega^{fat*}_{\F_1}$ have  ``ample" contact with $Q_0$.

We claim that
\begin{equation}\label{local-term-small}
|\nabla \s 1_{2\Delta_0^*}(X)|\leq M\,,\qquad X\in
\Omega^{fat*}_{\F_1} \cap B^{**}_{Q_0}.
\end{equation}
To prove that, we fix $X\in \Omega^{fat*}_{\F_1} \cap B^{**}_{Q_0}$, so that, in particular, $X\in U_Q^{fat*}$, $Q\in\dd_{\F_1}$.

\noindent{\bf Case 1}: $Q\in\dd_{Q_0}$. In this case we necessarily have $Q\in\dd_{\F_0}$ and, by definition, $U_Q^{fat*}\subset \Gamma(y)$ for any $y\in F_Q$.
Therefore \eqref{FQ} implies that $|\nabla \s 1_{2\Delta_0^*}(X)|\le N_*\left(\nabla \s 1_{2\Delta_0^*}\right)(y)\le M$.

\noindent{\bf Case 2}: $Q\notin\dd_{Q_0}$, $Q_0\subsetneq Q$. We first observe that $Q_0\in\dd_{\F_0,Q_0}$ and therefore \eqref{FQ} gives $N_*\left(\nabla \s 1_{2\Delta_0^*}\right)(y)\le M$ for every $y\in F_{Q_0}\subset Q_0$ and $\sigma(F_{Q_0})>0$. Then note that $U_Q^{fat*}\subset \Gamma(y)$ for every $y\in Q_0$, $Q\supset Q_0$, and consequently $|\nabla \s 1_{2\Delta_0^*}(X)|\le M$.

\noindent{\bf Case 3}: $Q\notin\dd_{Q_0}$, $Q_0\not\subsetneq Q$. In this case we have $Q\cap Q_0=\emptyset$. Note that since $X\in B^{**}_{Q_0}$ we have
$$
\ell(Q)\approx\delta(X)\le|X-x_{Q_0}|\lesssim \ell(Q_0)
$$
and taking $\NN$ large enough $\ell(Q)\le 2^{\NN}\ell(Q_0)$. Also,
by definition of $U_Q^{fat*}$ (cf. \eqref{UQ-fat*}), we have that $X\in 5I$
for some $I\in\,\mathcal{W}^*_Q$, so that
$$
|x_Q-x_{Q_0}|
\lesssim
\ell(Q)+\dist(I,Q)+\ell(I)+|X-x_{Q_0}|
\lesssim
\ell(Q_0).
$$
Thus $x_Q\in \tau B^{**}_{Q_0}$ if we take $\tau$ large enough, and, with the notation above,
$Q^\NN\cap \tau B^{**}_{Q_0}\neq\emptyset$. Thus $Q\in \mathfrak{C}_{Q_0}$ and by maximality there exists $Q^{max}\in\F_{\mathfrak{C}_{Q_0}}$ with $Q\subset Q^{max}$.
This contradicts the fact that $Q\in \dd_{\F_1}$ and this case is vacuous.

We next pick $\vec{C}_{Q_0}=\nabla\s 1_{\pom\setminus2\Delta_0^*}(x_{Q_0})$ and obtain by \eqref{local-term-small}  and standard Calde\-r\'{o}n-Zygmund estimates
that
\begin{equation}\label{linftybound}|\nabla \s 1(X) -\vec{C}_{Q_0}|\lesssim M\,,\qquad X\in
\Omega^{fat*}_{\F_1} \cap B^{**}_{Q_0}.
\end{equation}

As mentioned before, in the current step $\Omega$ is an approximating domain. Thus $\pom$ is comprised of pieces of faces of Whitney cubes of size $\approx 2^{-N}$.
Therefore,  qualitatively, $\omega\ll\sigma:=H^n\big|_{\pom}$.
On the other hand, as noted above, the fact that $\Omega$ is actually an approximating domain,
and therefore satisfies the qualitative exterior Corkscrew condition, implies that
$\hm$ is doubling and that the
Caffarelli-Fabes-Mortola-Salsa estimates hold,
with bounds that are independent of $N$ (cf.  \eqref{CFMS}-\eqref{doubling}).
We next deduce the following Carleson measure estimate:

\begin{lemma}\label{lemma:Carleson}
Under the previous assumptions
 \begin{equation}\label{carl}
\sup_{Q\in\dd_{Q_0}} \frac1{\hm^{X_0}(Q)}\iint_{T^{fat}_Q}|\nabla^2\s 1(X)|^2G(X,X_0)\,
1_{\Omega^{fat}_{\F_1}}(X)\,dX \leq \M_0\,,
\end{equation}
where the Carleson norm $\M_0$ depends on  $M$, and on dimension and the ADR and 1-sided NTA constants for $\Omega$. The point $X_0$ in \eqref{carl} is a corkscrew point for the surface ball $CB^{**}_{Q_0}\cap\pom$, where $C$ is chosen sufficiently large (depending on the ADR and 1-sided NTA constants of $\Omega$ only) to guarantee that $X_0\notin B^{**}_{Q_0}$.
\end{lemma}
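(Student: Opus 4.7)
Since $u := \s 1$ is harmonic in $\Omega$, each partial derivative $\partial_i u$ is harmonic, so for any constant vector $\vec C\in\ree$ the function $w:=|\nabla u - \vec C|^2$ satisfies the pointwise identity $\tfrac12\Delta w = |\nabla^2 u|^2$ in $\Omega$. Setting $\vec C = \vec C_{Q_0}$, the left-hand side of \eqref{carl} equals $\tfrac12\iint_{T_Q^{fat}}(\Delta w)\, G(\cdot,X_0)\, 1_{\Omega^{fat}_{\F_1}}\,dX$. The plan is to integrate by parts twice against a smooth cutoff $\Psi_Q$, exploit the $L^\infty$ bound \eqref{linftybound} on $w$ in the resulting transition shell, and then convert the remaining Whitney-scale integrals into harmonic measures of dyadic subcubes of $Q$ via \eqref{CFMS} and \eqref{doubling}.

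Concretely, I would construct $\Psi_Q$ by gluing smooth bumps on the underlying Whitney cubes so that $\Psi_Q\equiv 1$ on $T_Q^{fat}\cap\Omega^{fat}_{\F_1}$, $\supp\Psi_Q\subset T_Q^{fat*}\cap\Omega^{fat*}_{\F_1}\subset B^{**}_{Q_0}$, and $|\nabla\Psi_Q(X)|\lesssim\delta(X)^{-1}$, $|\Delta\Psi_Q(X)|\lesssim\delta(X)^{-2}$. By \eqref{eqWh1} the fattened Whitney cubes $4I$, $5I$ stay strictly inside $\Omega$, so $\Psi_Q$ is compactly supported in $\Omega$; and since $X_0\notin B^{**}_{Q_0}\supset\supp\Psi_Q$, the Green function $G(\cdot,X_0)$ is harmonic on $\supp\Psi_Q$. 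Green's identity then gives, with no boundary contribution from $\pom$ or from the pole $X_0$,
\[
\iint|\nabla^2 u|^2\,G\,\Psi_Q\,dX \;=\; \iint w\bigl(\nabla G\cdot\nabla\Psi_Q+\tfrac12 G\,\Delta\Psi_Q\bigr)\,dX.
\]
The integrand on the right is supported in the transition shell $\{\nabla\Psi_Q\neq 0\}\cup\{\Delta\Psi_Q\neq 0\}\subset\Omega^{fat*}_{\F_1}\cap B^{**}_{Q_0}$, where \eqref{linftybound} bounds $w$ by $CM^2$.

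It then suffices to prove $\iint_{\text{shell}}\bigl(|\nabla G|\,|\nabla\Psi_Q|+G\,|\Delta\Psi_Q|\bigr)\,dX\lesssim \omega^{X_0}(Q)$. I would cover the shell by its underlying Whitney cubes $I$ and associate to each such $I$ a dyadic cube $Q'\in\dd$ with $\ell(Q')\approx\ell(I)$ at bounded distance. Interior gradient estimates for the harmonic function $G(\cdot,X_0)$ combined with \eqref{CFMS} yield on $I$
\[
G(X,X_0)\approx\frac{\omega^{X_0}(\Delta_{Q'})}{\ell(Q')^{n-1}},\qquad |\nabla G(X,X_0)|\lesssim\frac{G(X,X_0)}{\delta(X)}\approx\frac{\omega^{X_0}(\Delta_{Q'})}{\ell(Q')^{n}},
\]
so each $I$ contributes at most $\omega^{X_0}(\Delta_{Q'})\approx\omega^{X_0}(Q')$ (by \eqref{doubling}, legitimate because $X_0\notin B^{**}_{Q_0}\supset Q'$). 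The geometry of $T_Q^{fat}\cap\Omega^{fat}_{\F_1}$ forces these $Q'$ to be either (i) a bounded number of ``roof'' cubes comparable in size and position to $Q$, or (ii) cubes of size $\approx\ell(Q_j^0)$ sitting just above a stopping cube $Q_j^0\in\F_0$ with $Q_j^0\subset Q$. Category (i) contributes $\lesssim\omega^{X_0}(Q)$, and category (ii) is dominated by $\sum_{Q_j^0\subset Q}\omega^{X_0}(Q_j^0)\le\omega^{X_0}(Q)$ by pairwise disjointness.

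The main technical obstacle is controlling the geometry of the transition shell. In particular, one must verify that the enlargement from $\F_0$ to $\F_1$ by $\F_{\mathfrak{C}_{Q_0}}$ removes all outer portions of $\partial\Omega^{fat*}_{\F_1}$ from $B^{**}_{Q_0}$: this is precisely the content of the case analysis leading to \eqref{local-term-small}, and it is what prevents spurious boundary pieces outside $Q_0$ from appearing in the shell and destroying either the $L^\infty$ bound on $w$ or the harmonic-measure count above. Once this is in place, the Carleson constant $\M_0$ produced depends only on $M$, the ADR constants, and the 1-sided NTA constants of $\Omega$, as claimed.
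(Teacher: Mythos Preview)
There is a genuine gap in your construction of $\Psi_Q$.  You claim that $\Psi_Q$ is compactly supported in $\Omega$ because each fattened Whitney cube $4I$ or $5I$ stays strictly inside $\Omega$; but this is a statement about individual cubes, not about their union.  The set $T_Q^{fat}\cap\Omega^{fat}_{\F_1}$ on which you require $\Psi_Q\equiv 1$ has closure meeting $\partial\Omega$: by \eqref{ample-sawtooth} there are points $x\in Q_0\setminus\bigcup_{\F_0}Q_j^0$, and for any such $x$ lying in $Q$ the arbitrarily small dyadic cubes containing $x$ all belong to $\dd_{\F_1,Q}$, so the corresponding Whitney regions accumulate at $x\in\partial\Omega$.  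Hence no smooth function can be identically $1$ on $T_Q^{fat}\cap\Omega^{fat}_{\F_1}$ and compactly supported in $\Omega$; your $\Psi_Q$ is necessarily equal to $1$ on a portion of $\partial\Omega$.

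Consequently, Green's identity over $\Omega$ does \emph{not} give the formula you wrote: the boundary term $\int_{\partial\Omega}(\partial_\nu w)\,G\,\Psi_Q\,d\sigma$ vanishes (since $G=0$ on $\partial\Omega$), but the companion term $-\int_{\partial\Omega} w\,\partial_\nu(G\Psi_Q)\,d\sigma = \int_{\partial\Omega} w\,\Psi_Q\,d\omega^{X_0}$ does not.  This is precisely term~$I$ in the paper's proof, and it must be estimated separately: one needs to pass the bound \eqref{linftybound} to the trace on $\partial\Omega\cap\partial\Omega^{fat}_{\F_1}$ (via layer-potential trace theory in the approximating domain, using $\omega\ll\sigma$ qualitatively), and then invoke \eqref{doubling} to bound $\omega^{X_0}(\Delta_Q^*)$ by $\omega^{X_0}(Q)$.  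Your dichotomy of the transition shell into ``roof'' cubes and cubes ``just above $Q_j^0$'' likewise omits this contact set with $\partial\Omega$.  Once the missing boundary term is restored and handled as above, your argument converges to the paper's.
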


\begin{proof}  We shall prove the lemma by a standard integration by parts argument, adapted to our setting, using
 \eqref{linftybound}, and the doubling property of harmonic measure.
Let us define a smooth cut-off function associated to $T^{fat}_Q$. Recall that $B^*_Q$ has radius $K l(Q)$ and we have  $T^{fat}_Q\subset \frac 14 B^*_Q$. We then take
$\Phi_{T^{fat}_Q}\in C_0^\infty(\RR^{n+1})$ with $\Phi_{T^{fat}_Q}=1$ on $ \frac 12 B^*_Q$ and $\Phi_{T^{fat}_Q}=0$ in $\Omega\setminus B^*_Q$ with $|\nabla^k \Phi_{T^{fat}_Q}|\lesssim l(Q)^{-k}$.
Taking $K'$ in the definition of $B^{**}_{Q_0}$ sufficiently large, one can ensure that for any $Q\subset Q_0$ we have $B^*_Q\subset \frac 12 B^{**}_{Q_0}$ and moreover, all regions $U_{Q'}^{fat*}$ that meet $B^*_Q$ are strictly inside $B^{**}_{Q_0}$. Now fix some $Q\subset Q_0$. Then
\begin{multline}\label{eqIBP1}
\iint_{T^{fat}_Q}|\nabla^2\s 1(X)|^2G(X,X_0)\,
1_{\Omega^{fat}_{\F_1}}(X)\,dX\\[4pt]\leq \iint_{\Omega^{fat}_{\F_1}} |\nabla^2\s 1(X)|^2G(X,X_0)\,
\Phi_{{T^{fat}_Q}}(X)\,dX\\[4pt]\approx \iint_{\Omega^{fat}_{\F_1}} \mathcal{L}\left( |\nabla \s 1(X)-\vec{C}_{Q_0}|^2\right)\,G(X,X_0)\,
\Phi_{{T^{fat}_Q}}(X)\,dX
\end{multline}
where $\mathcal{L}:=\nabla\cdot\nabla$ denotes the usual Laplacian in $\ree$. Recall that $\Omega$ is one of the approximating domains $\Omega_N$. Its boundary consists of portions of faces of fattened Whitney cubes of side length roughly $2^{-N}$. Thus, the outward unit normal $\nu$ is well-defined a.e. on $\pom$, as well as on $\partial{\Omega^{fat}_{\F_1}}$,  and we can apply the divergence theorem to integrate by parts.
Let us write $\sigma_\star=H^n\big|_{\partial\Omega^{fat}_{\F_1}}$.
Since $X_0\notin \overline{B^*_Q\cap \Omega^{fat}_{\F_1}}$, we have that $G(X,X_0)$ is harmonic in $\overline{B^*_Q\cap \Omega^{fat}_{\F_1}}\cap\Omega$, and of course vanishes on $\pom$.
Therefore, by Green's formula,
\begin{equation} \label{main-Carl}
\iint_{T^{fat}_Q}|\nabla^2\s 1(X)|^2G(X,X_0)\,
1_{\Omega^{fat}_{\F_1}}(X)\,dX
\lesssim
I+II+III
\end{equation}
where
$$
I=\int_{\pom\cap\partial \Omega^{fat}_{\F_1}} |\nabla \s 1(x)-\vec{C}_{Q_0}|^2 \,\Phi_{{T^{fat}_Q}}(x) \,d\omega^{X_0}(x),
$$
\begin{multline*}
II=\iint_{\Omega^{fat}_{\F_1}} |\nabla \s 1(X)-\vec{C}_{Q_0}|^2\Big(|\nabla G(X,X_0)|\,
|\nabla \Phi_{{T^{fat}_Q}}(X)|
\\
+|G(X,X_0)|\,
|\nabla^2 \Phi_{{T^{fat}_Q}}(X)|\Big)dX,
\end{multline*}
and
\begin{multline*}
III=
\int_{\partial \Omega^{fat}_{\F_1}\setminus \pom} \Big(
\big|\partial_\nu\big(|\nabla \s 1(x)-\vec{C}_{Q_0}|^2\big)\big|\,|G(x,X_0)|\,
| \Phi_{{T^{fat}_Q}}(x)|
\\
+|\nabla \s 1(x)-\vec{C}_{Q_0}|^2\,|G(x,X_0)|\,|\partial_\nu  \Phi_{{T^{fat}_Q}}(x)|
\\
+ |\nabla \s 1(x)-\vec{C}_{Q_0}|^2\,|\partial_\nu G(x,X_0)|\,
| \Phi_{{T^{fat}_Q}}(x)|\Big)\,d\sigma_\star(x)\,.
\end{multline*}

We start with $I$.  Since we are working in an approximating domain,
the estimate \eqref{linftybound}
carries all the way to $\pom\cap
\partial \Omega^{fat}_{\F_1}$ (a.e.),
by standard trace theory for layer potentials.   Moreover $\hm\ll\sigma$ (qualitatively, in the approximating domains), as noted above.
Hence,
\begin{equation}\label{I}
I \lesssim  M^2 \int_{\Delta^*_Q} d\omega^{X_0}(X) \lesssim M^2 \omega^{X_0}(Q),
\end{equation}
where as usual, $\Delta^*_Q:=B^*_Q\cap\pom$, and we have used the doubling property \eqref{doubling} to obtain the last inequality.


For $II$, we use \eqref{linftybound} and interior estimates for harmonic functions to write
\begin{align*}
II
&
\lesssim
M^2\iint_{B_Q^*} \left(\frac{G(X,X_0)}{\delta(X)\ell(Q)}+\frac{G(X,X_0)}{\ell(Q)^2}\right)\,dX
\lesssim
M^2\iint_{B_Q^*} \frac{G(X,X_0)}{\delta(X)\ell(Q)}\,dX.
\end{align*}
Define $\mathcal{I}^*_Q=\{I\in\mathcal{W}: I\cap B_Q^*\neq\emptyset\}$. For each $I\in \mathcal{I}^*_Q$ we let $Q_I\in\dd$ be such that $\ell(Q_I)=\ell(I)$ and $\dist(I,Q_I)\approx \ell(I)$ (take for instance $Q_I$ that contains $x_I\in\pom$ where $\delta(X(I))=|X(I)-x_I|$ with $X(I)$ being the center of $I$). Note that $Q_I\subset C\,\Delta_Q^*$ and also that if $Q_I=Q_{I'}$ then $\dist(I,I')\lesssim\ell(I)=\ell(I')$. This implies that the family $\{Q_I:\ell(I)=2^{-k}\}$ has bounded overlap. Then by the Harnack chain condition, \eqref{CFMS} and \eqref{doubling} we obtain
\begin{multline}\label{II}
II
\lesssim
M^2\sum_{I\in \mathcal{I}^*_Q}
\frac{G(X_I,X_0)\,\ell(I)^n}{\ell(Q)}
\lesssim
M^2\sum_{I\in \mathcal{I}^*_Q}
\frac{\ell(I)}{\ell(Q)} \hm^{X_0}(Q_I)
\\
=M^2
\sum_{k:2^{-k}\lesssim \ell(Q)}\frac{2^{-k}}{\ell(Q)}\sum_{I\in \mathcal{I}^*_Q:\ell(I)=2^{-k}} \hm^{X_0}(Q_I)
\lesssim
M^2
\hm^{X_0}(C\,\Delta_Q^*)
\lesssim
M^2\hm^{X_0}(Q).
\end{multline}

Finally, we estimate $III$.
Note that by construction,
\eqref{linftybound} gives $|\nabla \s 1(x) -\vec{C}_{Q_0}|\lesssim M$ for every $x\in  \big(\partial\Omega^{fat}_{\F_1}\setminus\pom\big)\cap B^*_Q$.  Moreover, by \eqref{linftybound} and interior estimates for harmonic functions,
for $x\in\big(\partial\Omega^{fat}_{\F_1}\setminus\pom\big)\cap B^*_Q$ we also have
$$
\left|\partial_\nu\left(|\nabla \s 1(X)-\vec{C}_{Q_0}|^2\right)\right|\lesssim |\nabla(\nabla \s 1(X)-\vec{C}_{Q_0})|\, |\nabla \s 1(X)-\vec{C}_{Q_0}|
\lesssim \frac{M^2}{\delta(x)}\,.
$$
Then proceeding as before
\begin{multline*}
III
\lesssim
M^2\,\int_{\big(\partial\Omega^{fat}_{\F_1}\setminus\pom\big)\cap B^*_Q} \left(\frac{G(x,X_0)}{\delta(x)}+ \frac{G(x,X_0)}{\ell(Q)}\right)\,d\sigma_\star(x)
\\
\lesssim
M^2\,\int_{\big(\partial\Omega^{fat}_{\F_1}\setminus\pom\big)\cap B^*_Q} \frac{G(x,X_0)}{\delta(x)}\,d\sigma_\star(x).
\end{multline*}

Let us briefly state the strategy to estimate the last term. Notice that $\Sigma=\Omega^{fat}_{\F_1}\setminus\pom$ consists of portions of faces of some fattened Whitney cubes. Take one of these Whitney cubes, say $J$, and choose some  dyadic cube $Q_J$ such that $\ell(Q_J) \approx\ell(J)$ and $\dist(J,Q_J)\approx \ell(J)$. Then \eqref{CFMS} imply
$$
\int_{\Sigma\cap J} \frac{G(x,X_0)}{\delta(x)}\,d\sigma_\star(x)
\lesssim
\frac{\hm^{X_0}(Q_J)}{\ell(Q_J)^n} \,H^n(\Sigma\cap J)
\lesssim
\hm^{X_0}(Q_J).
$$
From here it remains to sum in $J$. For that, we will need to have some bounded overlap property, which we shall obtain by choosing a particular collection of $Q_J$'s. Let us make this precise by using some ideas from \cite[Appendix A.3]{HM-I}.  First, we observe that $\Sigma=\partial\Omega^{fat}_{\F_1}\setminus\pom$ consists of (portions of) faces of certain fattened Whitney cubes
$4J$, with $\interior(4J)\subset\Omega^{fat}_{\F_1}$,
which meet some $I\in\W$ such that $I\notin \W_Q^*$ for any $Q \in \dd_{\F_1}$.
If for each such $I$, we let $Q^*_I$
denote the nearest dyadic cube to $I$ with
$\ell(I) =\ell(Q^*_I)$,
then $I\in \W_{Q^*_I}^*$, 
whence it follows that $Q^*_I\subset Q_j$, for
some  $Q_j\in\F_1$.
On the other hand, since $\interior(4J)\subset\Omega^{fat}_{\F_1}$ we have that $J\in \W_{Q_J}^*$ for some $Q_J\in \dd_{\F_1}$. In, particular $Q_J$ is not contained in $Q_j$ and therefore, upon a moment's reflection, one may readily see that $\dist(Q^*_I,\pom\setminus Q_j)\lesssim\ell(Q^*_I)$.
Thus, as an easy consequence of the properties of the dyadic cubes,
we may select a descendant of $Q_I^*$, call it $Q_I$, of comparable size, in such a way that
\begin{equation}\label{eq2.39}
\dist(Q_I,\pom\setminus Q_j)\approx \ell(I)\approx\ell(Q_I)\,,
\end{equation}
while of course retaining the property that $\dist(Q_I,I)\approx \ell(I)$.

For each $Q_j\in \F_1$, we set
$$
\R_{Q_j}:=\cup_{Q'\in \dd_{Q_j}}\W_{Q'}^*,
$$
and denote by $\F_1^*$
the sub-collection of those $Q_j\in \F_1$ such that
there is an $I\in \R_{Q_j}$ for which $B_Q^*\cap\Sigma$ meets
$I $. For a given $I\in \R_{Q_j}$, the relevant $Q'\in \dd_{Q_j}$, such that
$I\in \W_{Q'}^*$,
is $Q'=Q^*_I$.  Set $\Sigma_j:=\Sigma\cap(\cup_{I\in\R_{Q_j}} I)$ and note that $\Sigma\subset \cup_j \Sigma_j$.
We observe that by \eqref{eq2.39}, for $j$ fixed, the cubes $Q_I$ have bounded overlaps
as the index runs over those $I\in \R_{Q_j}$ which meet $\Sigma$.
Indeed, \eqref{eq2.39} says that two cubes $Q_I$ and $Q_{I'}$ cannot meet
unless $\ell(I)\approx\ell(I')$, and by a simple geometric packing argument, the cubes $Q_I$, corresponding to a collection $\{I\}$ of
Whitney boxes  having comparable size, clearly have bounded overlaps.   We note that if
$x\in I\cap\Sigma_j\cap B_Q^*$, then by the Harnack chain condition and \eqref{CFMS} we have
$$
\frac{G(x,X_0)}{\delta(x)}
\lesssim
\frac{\hm^{X_0}(Q_I)}{\ell(I)^n}\,.
$$
Moreover, $\sigma_\star (I\cap \Sigma)=H^n(I\cap \Sigma)\lesssim \ell(I)^n$, since
$\Sigma\cap I$ consists of a bounded number  of (portions of) faces of certain fattened Whitney cubes with side length of the order of $\ell(I)$.
Thus,  for every $j$ such that $Q_j\in \F_1^*$
\begin{multline}\label{claim:bd}
\int_{\Sigma_j\cap B^*_Q} \frac{G(x,X_0)}{\delta(x)}\,d\sigma_\star(x)
\leq \sum_{I\in\R_{Q_j}} \int_{\Sigma\cap I\cap B^*_Q} \frac{G(x,X_0)}{\delta(x)}\,d\sigma_\star(x)\\[4pt]
\lesssim  \sum_{I\in\R_{Q_j}} \hm^{X_0}(Q_I) \lesssim
\hm^{X_0}(Q_j\cap C\Delta_Q^*)\,,
\end{multline}
by the bounded overlap property of the $Q_I$'s.
It follows that
\begin{multline}\label{III}
III
\lesssim M^2
\sum_{j:\,Q_j\in\F^*_1}\int_{\Sigma_j\cap B^*_Q} \frac{G(x,X_0)}{\delta(x)}\,d\sigma_\star(x)
\\
\lesssim M^2
\sum_{j:\,Q_j\in\F^*_1}\hm^{X_0}(Q_j\cap C\Delta_Q^*)
\lesssim
M^2 \hm^{X_0}(Q),
\end{multline}
where in the last inequality we have used 
the pairwise disjointness of the $Q_j$'s, 
and also the doubling property \eqref{doubling}.
We may then plug \eqref{I}, \eqref{II} and \eqref{III} into \eqref{main-Carl}, to conclude as desired that \eqref{carl} holds, with $\M_0\approx M^2$.  \end{proof}

We now define
\begin{equation}\label{eq7.2a}
\alpha_Q := \dint_{U_Q^{fat}}|\nabla^2 \mathcal{S}1(X)|^2 \,G(X,X_0) \,dX \,,\qquad
Q\in\dd_{\F_0,Q_0}\,,\end{equation}
and set $\alpha_Q\equiv 0$, for all $Q\in \dd_{Q_j^0}$, and for every $Q_j^0\in\F_0.$
Given any sub-collection $\dd'\subset\dd_{Q_0}$, we define
\begin{equation}\label{eq7.3a}\mut(\dd'):= \sum_{Q\in\dd'}\alpha_Q\,.
\end{equation}
By \eqref{carl}, $\mut$ satisfies the discrete Carleson measure condition
\begin{equation*}
\mut(\dd_Q)\lesssim\M_0 \,\hm^{X_0}(Q)\,, \qquad Q\in\dd_{\F_0,Q_0}\,,
\end{equation*}
(cf. \cite[Section 8.1]{HM-I})
and $\mut(\dd_Q) =0$, for all $Q$ contained in any $Q_j^0\in\F_0$.
Since $\mut$ is non-zero only in $\dd_{\F_0,Q_0}$, we have that
\begin{equation}\label{carldiscrete}
\mut(\dd_Q)\lesssim\M_0 \, \omega_0(Q)\,,\qquad Q\in\dd_{Q_0}\,,
\end{equation}
where
\begin{equation}\label{eq2.hm0def}
\hm_0:= \Pfo^{\,\sigma}\,\hm^{X_0}\,,
\end{equation}
and in general, the projection of a Borel measure $\mu$
with respect to a pairwise disjoint family $\F:=\{Q_j\}\subset\dd$, in terms of another Borel measure
$\nu$, is defined by
\begin{equation}\label{projdef}
\Pf^{\,\nu}\mu(A):= \mu (A\setminus \cup_\F Q_j)+\sum_\F \frac{\nu(A\cap Q_j)}{\nu (Q_j)}\,\mu(Q_j).
\end{equation}
In particular, we have that $\P_\F^{\,\nu}\mu(Q)=\mu(Q)$, for every $Q\in\dd_\F$ (i.e.,
for $Q$ not contained in any $Q_j\in\F$), and also that
$\P_\F^{\,\nu} \mu(Q_j)=\mu(Q_j)$ for every $Q_j\in\F$.

We now claim that $\hm_0\in A^{\rm dyadic}_\infty(Q_0)$ with respect to $\sigma$, that is, for every $Q\in\dd_{Q_0}$ and
$F\subset Q$, we have
\begin{equation}\label{eq1.ainftydyadic}
\frac{\hm_0 (F)}{\hm_0 (Q)}\leq C \left(\frac{\sigma(F)}{\sigma(Q)}\right)^\theta.
\end{equation}

Let us momentarily accept this claim, and show that it implies that $\pom$ is UR.  To this end, since $Q_0\in\dd$ was arbitrary, 
it is enough to show that $\hm^{X_0} \in A^{\rm dyadic}_\infty (Q_0)$
uniformly in $Q_0$:
indeed the latter property plus \eqref{doubling} imply that $\hm^{X_\Delta}\in A_\infty(\Delta)$ for every surface ball $\Delta$, whence it follows immediately by \cite[Theorem 1.22]{HMU}
that $\pom$ is
UR.  In turn, again since $Q_0\in\dd$ is arbitrary,
 by the arguments of Bennewitz and Lewis \cite{BL} (for which, one may also consult
\cite[Section 8]{HM-I}),  to establish the dyadic $A_\infty$ property,
it suffices to show that there are constants $\eta_0,c_0\in (0,1)$  such that
for every Borel set $A\subset Q_0$, we have
\begin{equation}\label{*}
\sigma(A)>(1-\eta_0)\,\sigma(Q) \implies \hm^{X_0}(A)\geq c_0\,.
\end{equation}
To prove this, we observe that, given such an $A$, for uniform choices of
$\eta_0$ small enough, and $M$  (in \eqref{oodef}) large enough, we have that \eqref{ample-sawtooth} implies
$\sigma(A\setminus \oo_0) \geq c_1 \sigma(Q_0)$
for some uniform constant $c_1>0$. Therefore $\omega_0(A\setminus \oo_0)\gtrsim \omega_0(Q_0)$, since our claimed dyadic $A_\infty$ property
\eqref{eq1.ainftydyadic} implies immediately the
converse of itself.  The latter fact
is standard (see \cite[Lemma B.7]{HM-I} in the present context) since $\sigma$ and $\hm_0$ are dyadically doubling (for the latter we invoke \cite[Lemma B.1]{HM-I} and \eqref{doubling}). It therefore follows that
$$\hm^{X_0}(A) \geq \hm^{X_0}(A\setminus \oo_0)= \omega_0(A\setminus \oo_0)
\gtrsim \omega_0(Q_0) = \hm^{X_0}(Q_0) \approx 1\,,$$
where we have used (in the two equalities) the definition of the projection operator $\Pfo^{\,\sigma}$,
 and finally Bourgain's estimate (cf. \eqref{eq2.Bourgain2}).  Thus, \eqref{*} holds, and as explained above, this concludes the proof of the uniform rectifiability of $\pom$, modulo the claim \eqref{eq1.ainftydyadic}.

We now turn to the proof of \eqref{eq1.ainftydyadic}.  We first record a few preliminary observations.
Recall that we are working in an approximating domain, so that $\hm^{X_0}$ is doubling, with
a {\it uniform} constant that depends {\it only} upon dimension and the constants in the ADR and
1-sided NTA conditions.
By  \cite[Lemma B.1]{HM-I}, we then have that $\hm_0$ is dyadically doubling (again with the same dependence).
Our strategy is to use  \cite[Lemma 8.5]{HM-I}  but with the roles of
$\sigma$ and $\omega$ reversed, and in our case, with $\hm = \hm_0$. We note that \cite[Lemma 8.5]{HM-I}
is a purely real variable result, and the only requirement on the two measures involved is that they be
non-negative, dyadically doubling Borel measures (see \cite[Remark 8.9]{HM-I}).  This role reversal of $\sigma$ and $\hm_0$ is required by the fact that our discrete Carleson condition \eqref{carldiscrete} is expressed in terms
of $\hm_0$. To be precise we state that result in the formulation needed here:

\begin{lemma}[See {\cite[Lemma 8.5, Remark 8.9]{HM-I}}]\label{lemma:extrapol}
We fix $Q_0\in \dd$.
Let $\sigma$ and $\omega_0$ be a pair of
non-negative,
dyadically doubling Borel measures on $Q_0$,
and let $\mut$ be a discrete Carleson measure with respect to $\hm_0$
(cf. \eqref{carldiscrete}) with
$$
\sup_{Q'\in \dd_{Q_0}}\frac{\mut(\dd_{Q'})}{\hm_0(Q')} \leq \mathcal{M}_0.
$$
Suppose that there is a $\gamma>0$ such that for every $Q\in \dd_{Q_0}$ and every family of pairwise disjoint dyadic subcubes  $\F=\{Q_j\}\subset \dd_{Q}$
verifying
\begin{equation}\label{smallmut}
\sup_{Q'\in \dd_{\F,Q}}\frac{\mut(\dd_{Q'})}{\hm_0(Q')} \leq \gamma\,,
\end{equation}
we have that $\Pf^{\hm_0}\sigma$ satisfies the following property:
\begin{equation}\label{extrap:Ainfty:Pw}
\forall\,\varepsilon\in (0,1),\ \exists\, C_\varepsilon>1 \mbox{ such that }
\Big(
F\subset Q,\ \ \frac{\hm_0(F)}{\hm_0(Q)}\ge
\varepsilon\quad \Longrightarrow \quad
\frac{\P^{\hm_0}_\F \,\sigma(F)}{\P^{\hm_0}_\F\, \sigma(Q)}\ge \frac1{C_\varepsilon}\Big).
\end{equation}
Then, there exist $\eta_0\in(0,1)$  and $C_0<\infty$
such that, for every  $Q\in \dd_{Q_0}$,
\begin{equation}\label{extrap:Ainfty:w}
F\subset Q,\quad \frac{\hm_0(F)}{\hm_0(Q)}\ge
1-\eta_0\quad \Longrightarrow \quad \frac{\sigma(F)}{\sigma(Q)}\ge \frac1{C_0}.
\end{equation}
\end{lemma}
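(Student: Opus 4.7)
The plan is to establish \eqref{extrap:Ainfty:w} via a corona-type stopping-time decomposition of $Q_0$ adapted to the discrete Carleson measure $\mut$ relative to $\hm_0$, iterated through all generations. For each $Q \in \dd_{Q_0}$, let $\F(Q) = \{Q_j^Q\}$ denote the maximal disjoint family of subcubes of $Q$ on which $\mut(\dd_{Q_j^Q})/\hm_0(Q_j^Q) > \gamma$. By maximality, every $Q' \in \dd_{\F(Q),Q}$ satisfies $\mut(\dd_{Q'})/\hm_0(Q') \leq \gamma$, so condition \eqref{smallmut} holds for the pair $(Q,\F(Q))$ and \eqref{extrap:Ainfty:Pw} is available. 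Moreover, the global Carleson bound yields the packing estimate
\[
\gamma \, \hm_0\Big(\bigcup \F(Q)\Big) \;\leq\; \sum_{Q_j \in \F(Q)} \mut(\dd_{Q_j^Q}) \;\leq\; \mut(\dd_Q) \;\leq\; \M_0\,\hm_0(Q),
\]
so $\hm_0(\cup \F(Q)) \leq (\M_0/\gamma)\,\hm_0(Q)$. Iterating, I set $\F^0 := \{Q_0\}$ and $\F^{k+1} := \bigcup_{R \in \F^k} \F(R)$; both the packing estimate and the applicability of \eqref{extrap:Ainfty:Pw} persist uniformly at every generation, since $\M_0$ and $\gamma$ are scale-invariant.

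Given $F \subset Q$ with $\hm_0(F)/\hm_0(Q) \geq 1-\eta_0$ for a small $\eta_0$ to be chosen, I would descend through the generations, applying \eqref{extrap:Ainfty:Pw} at each stage. On every $R \in \F^k$, the projection $\P^{\hm_0}_{\F(R)}\sigma$ coincides with $\sigma$ on the good complement $R \setminus \cup\F(R)$ and satisfies $\P^{\hm_0}_{\F(R)}\sigma(R) = \sigma(R)$. When a definite portion of the $\hm_0$-mass of $F\cap R$ falls in the good complement, \eqref{extrap:Ainfty:Pw} directly produces a lower bound of the form $\sigma(F \cap R) \gtrsim \sigma(R)$. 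Otherwise, most of the $\hm_0$-mass of $F \cap R$ is absorbed into the stopping cubes, and a pigeon-holing argument (using dyadic doubling of $\hm_0$) locates some $Q_j^R \in \F(R)$ on which $\hm_0(F \cap Q_j^R)/\hm_0(Q_j^R) \geq 1-\eta_0$, allowing the argument to recurse. The packing estimate ensures that $\hm_0(\cup \F^k)$ decays across generations and that the decomposition exhausts $Q$ modulo an $\hm_0$-null set; assembling the contributions from the good parts across generations, and choosing $\eta_0$ small in terms of $\M_0/\gamma$ and the constants $C_\varepsilon$ from \eqref{extrap:Ainfty:Pw}, yields $\sigma(F) \geq \sigma(Q)/C_0$.

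The main obstacle is that the hypothesis only compares the projected measure $\P^{\hm_0}_\F\sigma$, rather than $\sigma$ itself, to $\hm_0$. The corona decomposition is engineered precisely to bridge this gap: since $\P^{\hm_0}_\F\sigma$ agrees with $\sigma$ off the stopping cubes and preserves total mass on $Q$, the hypothesis reads, at each generation, as a \emph{genuine} comparison between $\sigma$ and $\hm_0$ on the good part. Tracking how the constants interact across generations, and balancing $\gamma$, $\M_0$, and $\eta_0$ so as to close the induction, is the delicate point; the dyadic doubling of both $\hm_0$ and $\sigma$, used throughout to pass between cubes and arbitrary Borel subsets, is essential for making the iteration close uniformly in the scale.
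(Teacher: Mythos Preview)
The paper does not prove this lemma; it is quoted verbatim from \cite[Lemma~8.5, Remark~8.9]{HM-I}, and the text that follows the statement is devoted to \emph{verifying the hypothesis} \eqref{smallmut}$\Rightarrow$\eqref{extrap:Ainfty:Pw} in the particular setting of the paper, not to proving the abstract extrapolation result. So there is no proof here to compare against directly. Your overall plan---a corona-type stopping-time decomposition iterated through generations, applying \eqref{extrap:Ainfty:Pw} on each corona piece---is precisely the scheme carried out in \cite{HM-I}.

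That said, your sketch contains a concrete gap in the stopping rule and the packing step. With $\F(Q)$ defined as the maximal subcubes $Q'\subset Q$ with $\mut(\dd_{Q'})/\hm_0(Q')>\gamma$, in the relevant regime $\M_0>\gamma$ the top cube $Q_0$ itself satisfies the stopping condition, so $\F(Q_0)=\{Q_0\}$ and the iteration stalls at the outset. Even after restricting to proper subcubes, the packing bound you display, $\hm_0\big(\bigcup\F(Q)\big)\le(\M_0/\gamma)\,\hm_0(Q)$, is weaker than the trivial estimate $\hm_0\big(\bigcup\F(Q)\big)\le\hm_0(Q)$ and yields no decay whatsoever; the assertion that ``$\hm_0(\cup\F^k)$ decays across generations'' is therefore unsupported, and without it the recursion in your second paragraph does not terminate quantitatively.

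What is missing is a stopping criterion under which the corona pieces carry \emph{disjoint} portions of $\mut$, so that summing over all generations produces a genuine Carleson packing of the stopping tree, $\sum_{k\ge1}\sum_{R\in\F^k}\hm_0(R)\lesssim(\M_0/\gamma)\,\hm_0(Q_0)$. In \cite{HM-I} this is arranged via a John--Nirenberg-type refinement of the stopping rule (tracking the increment of $\mut$ accrued within each corona rather than the full $\mut(\dd_{Q'})$); once that packing is in place, your dichotomy on each corona piece and the use of dyadic doubling of both measures are along the right lines.
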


\begin{remark}
Let us point out that \eqref{extrap:Ainfty:w} says that $\sigma \in A^{\rm dyadic}_\infty(Q_0,\hm_0)$. This in turn is equivalent to $\hm_0 \in A^{\rm dyadic}_\infty(Q_0,\sigma)$ since $\sigma$ and $\hm_0$ are dyadically doubling, see \cite[Lemma B.7]{HM-I}.
\end{remark}

To show that \eqref{smallmut} implies \eqref{extrap:Ainfty:Pw} we record for the reader's convenience that
\begin{align}\label{projdef2}
\Pf^{\,\hm_0}\sigma(F):= \sigma (F\setminus \cup_\F Q_j)
+\sum_\F \frac{\hm_0(F\cap Q_j)}{\hm_0 (Q_j)}\,\sigma(Q_j)\,;
\\[4pt]
\label{projdef3}
\Pf^{\,\sigma}\hm_0(F):= \hm_0 (F\setminus \cup_\F Q_j)
+\sum_\F \frac{\sigma(F\cap Q_j)}{\sigma (Q_j)}\,\hm_0(Q_j)
\end{align}
(cf. \eqref{projdef}).
We next note that
\eqref{extrap:Ainfty:Pw} holds trivially, with $Q$ replaced by any $\widetilde{Q}$
contained in some $Q_j\in \F$.
Indeed, in that case we have by \eqref{projdef2}, that for $F\subset \widetilde{Q}\subset Q_j\in\F$,
$$\frac{\Pf^{\,\hm_0}\sigma(F)}{\Pf^{\,\hm_0}\sigma(\widetilde{Q})}=
\frac{\frac{\hm_0(F)}{\hm_0 (Q_j)}\,\sigma(Q_j)}{\frac{\hm_0(\widetilde{Q})}{\hm_0 (Q_j)}\,\sigma(Q_j)}
=\frac{\hm_0(F)}{\hm_0(\widetilde{Q})}\,.$$
Moreover,
\eqref{smallmut} holds also with $Q$ replaced by any $\widetilde{Q}\in \dd_{\F,Q}$.
Therefore, by the usual self-improvement properties for (dyadic) $A_\infty$ weights,
it is enough to show that, given \eqref{smallmut} with $\gamma$ small enough,
there are constants $\eta_1,c_1\in(0,1)$ such that
\begin{equation}\label{eq1.13+}
F\subset Q,\ \ \frac{\hm_0(F)}{\hm_0(Q)}\ge
1-\eta_1\quad \Longrightarrow \quad
\frac{\P^{\hm_0}_\F \,\sigma(F)}{\P^{\hm_0}_\F\, \sigma(Q)}\ge c_1\,,
\end{equation}
since the same will then hold also for every $\widetilde{Q}\in\dd_{\F,Q}$.
In turn, to verify the latter implication, our strategy will be to prove that,
given \eqref{smallmut} with $\gamma$ small enough, we have
\begin{equation}\label{eq1.14+}
\Pf^\sigma \hm_0 \in A_\infty^{\rm dyadic}(Q), \,{\rm \,with \,respect\, to\,} \sigma\,.
\end{equation}
We defer momentarily the proof of this fact, and we show that
\eqref{eq1.14+} implies \eqref{eq1.13+}.  There are two cases.
We now fix $\eta_1$ so that $1-3\eta_1> 3/4$,
and suppose that $\hm_0(F)\ge (1-\eta_1)\,\hm_0(Q)$.

\medskip
\noindent{\bf Case 1:}  $\hm_0(Q\setminus \cup_{\F}Q_j)\geq 2\eta_1 \hm_0(Q).$  In this case,
by definition \eqref{projdef3},
$$\Pf^{\,\sigma}\hm_0(F\setminus \cup_{\F}Q_j)
=\hm_0(F\setminus \cup_{\F}Q_j) \geq \eta_1\,\hm_0(Q)=\eta_1\,\Pf^{\,\sigma}\hm_0(Q)\,.$$
Consequently, by  \eqref{eq1.14+},
we have
$$\sigma(F\setminus\cup_{\F}Q_j) \geq C_{\eta_1} \sigma(Q).$$
We now obtain
\eqref{eq1.13+} from properties of the projection operator \eqref{projdef2}.

\medskip

\noindent{\bf Case 2:}  $\hm_0(Q\setminus \cup_{\F}Q_j)\leq 2\eta_1 \hm_0(Q).$  In this case,
we then have that
$$\sum_{\F}\hm_0(Q_j) \geq (1-2\eta_1)\,\hm_0(Q)\,.$$ Set $\F':= \big\{Q_j\in\F: \hm_0(F\cap Q_j)\geq \frac13
\hm_0(Q_j)\big\}$.
It follows that
\begin{equation}\label{eq1.16+}
\sum_{\F'}\hm_0(Q_j) \geq \frac13 \hm_0(Q)\,,\end{equation}
for if not, we would have
\begin{multline*}(1-\eta_1)\hm_0(Q)\leq\hm_0(F) = \sum_{\F'} \hm_0(F\cap Q_j) + \sum_{\F\setminus\F'} \hm_0(F\cap Q_j)
+\hm_0(F\setminus
\cup_{\F} Q_j)\\[4pt]
\leq\, \frac23\hm_0(Q) + 2\eta_1\hm_0(Q)\,,
\end{multline*}
which contradicts that
we have chosen $1-3\eta_1>3/4$.  Moreover, by definition of the projections,
we have that for each $Q_j\in\F$, $\Pf^{\,\sigma}\hm_0(Q_j) = \hm_0(Q_j)$,
so by \eqref{eq1.14+}
and \eqref{eq1.16+}, we have
$$\sum_{\F'}\sigma(Q_j)\geq c\, \sigma(Q)=c\,\Pf^{\hm_0}\sigma(Q)\,.$$
Thus,
$$\P^{\hm_0}_\F \,\sigma(F) \geq \sum_{\F'} \frac{\hm_0(F\cap Q_j)}{\hm_0 (Q_j)}\,\sigma(Q_j)
\geq \frac13 \sum_{\F'}\sigma(Q_j)\,\gtrsim \,\Pf^{\hm_0}\sigma(Q)\,,$$
as desired.

It therefore remains only to verify \eqref{eq1.14+}, assuming that \eqref{smallmut} holds for
$\gamma$ sufficiently small.  Let $\F_2$ denote the maximal elements of $\F \cup \F_0$.
By definition of $\hm_0$, $\P_\F^\sigma \hm_0$ has a constant density with respect to $\sigma$, on any cube $Q$ that is contained in any $Q_j\in\F_2$; thus, in proving \eqref{eq1.14+}, we may henceforth assume that $Q\in\dd_{\F_2,Q_0}$.
As in \cite[Section 8.1]{HM-I}, it follows from  \eqref{smallmut}
that
\begin{equation}\label{carlsmall}
\sup_{Q'\in\dd_Q} \frac1{\hm^{X_0}(Q')}\iint_{\Omega^{fat}_{\F_2,Q'}}|\nabla^2\s 1(X)|^2G(X,X_0)\,
dX \leq C\,\gamma\,.
\end{equation}
We proceed as in \cite[Proof of Lemma 5.10]{HM-I} and use the notation there.
Let us fix $Q'\in \dd_{\F_2,Q}$, and
let $\Phi\in\C_0^\infty(\ree)$ be a smooth cut-off adapted to $Q'$.  Let $r\approx \ell(Q')$, and
let $\mathcal{L}:=\nabla\cdot\nabla$, as before, denote the Laplacian in $\ree$.  By \cite[Lemma 3.55]{HM-I} there exists an $(n+1)$-dimensional ball $B'_{Q'}$ of radius comparable to $r$  such that $B'_{Q'}\subset B_{Q'}$, $B'_{Q'}\cap\Omega\subset T_{Q'}$, and $\Omega_{\F_2,Q}\cap B'_{Q'}=\Omega_{\F_2,Q'}\cap B'_{Q'}$. We then have that
by the ADR property,
\begin{multline}\label{eq4.main} \hm^{X_0}(Q') \approx \frac{\hm^{X_0}(Q')}{\sigma(Q')} r^n
\approx \frac{\hm^{X_0}(Q')}{\sigma(Q')}
\int_{\partial\Omega} \Phi\, d\sigma
=\frac{\hm^{X_0}(Q')}{\sigma(Q')}\,\,\langle -\mathcal{L}\,\mathcal{S}1,\Phi\rangle\\[4pt]
=\frac{\hm^{X_0}(Q')}{\sigma(Q')} \iint_{\ree}\Big(\nabla \mathcal{S}1(X)-
\nabla \mathcal{S}1_{(2\Delta^*_{Q'})^c}(x_{Q'})- \vec{\alpha}\Big)\cdot \nabla\Phi(X)\,dX\\[4pt]
\lesssim \,\frac{\hm^{X_0}(Q')}{\sigma(Q')}\, \frac1r\, \iint_{B'_{Q'}}\big|\nabla \mathcal{S}1(X)-
\nabla \mathcal{S}1_{(2\Delta^*_{Q'})^c}(x_{Q'})- \vec{\alpha}\big|\,dX\\[4pt]
\,=\,\frac{\hm^{X_0}(Q')}{\sigma(Q')}\,\frac1r\left(\iint_{\Omega\cap B'_{Q'}}
\,\,+\,\,\iint_{\Omega_{ext}\cap B'_{Q'}}\right)\\[4pt]
=\,\frac{\hm^{X_0}(Q')}{\sigma(Q')}\,\frac1r\left(\iint_{\Omega_{\F_2,Q'}\cap B'_{Q'}}\,\,+
\,\,\iint_{\big(\Omega\setminus \Omega_{\F_2,Q}\big)\cap B'_{Q'}}
\,\,+\,\,\iint_{\Omega_{ext}\cap B'_{Q'}}\right)\\[4pt]
=:\, \frac{\hm^{X_0}(Q')}{\sigma(Q')}\,\frac1r\Big(I+II+III\Big)\,,
\end{multline}
where $\vec{\alpha}$ is a constant vector at out disposal and  $\Omega_{ext}:= \ree\setminus \overline{\Omega}.$

We deal with term $I$ first.  Let $\eps >0$ be a small number to be determined and observe that
$$I=\iint_{\Omega_{\F_2(\eps r),Q'}\cap B'_{Q'}}\,\,+
\,\,\iint_{\big(\Omega_{\F_2,Q}\setminus \Omega_{\F_2(\eps r),Q'}\big)\cap B'_{Q'}}=:I' + I''\,,$$
where
in $\Omega_{\F_2(\eps r),{Q'}}$ we have that $\delta(X)\gtrsim \eps r$, and where
$\Omega_{\F_2,Q}\setminus \Omega_{\F_2(\eps r),Q'}$ is a thin ``collar region" of thickness
$\lesssim \eps r$ (see \cite[Section 4]{HM-I}).  As in  \cite[Proof of Lemma 5.10]{HM-I} we next pick
$$
\vec{\alpha}:= \frac1{|\Omega_{\F_2(\eps r),Q'}|}
\iint_{\Omega_{\F_2(\eps r),Q'}}\Big(\nabla \mathcal{S}1(X)-
\nabla \mathcal{S}1_{(2\Delta_{Q'}^*)^c}(x_{Q'})\Big)\,dX$$ and it is shown in \cite[Proof of Lemma 5.10]{HM-I} that $|\vec{\alpha}|\le C$.

Term $I''$ is estimated as in \cite[Proof of Lemma 5.10]{HM-I} by means of an $L^q$ bound for $\nabla \s 1_{\kappa Q'}$:
$$\frac{\hm^{X_0}(Q')}{\sigma(Q')}\,\frac1r\,I'' \lesssim \eps^\beta \,\hm^{X_0}(Q')\,,$$
for some $\beta>0$,
which may be hidden by choice of $\eps$ small enough.  For such an $\eps$ now fixed,
we now turn to term $I'$, bearing in mind that in the domain of integration in this term,
we have that $r \approx \delta(X)$, with implicit constants depending on $\eps$.  We use the Poincar\'{e} inequality proved in \cite[Lemma 4.8]{HM-I}:
$$
\iint_{\Omega_{\F_2(\epsilon r),Q'}}\big|\nabla \mathcal{S}1(X)-
\nabla \mathcal{S}1_{(2\Delta^*_{Q'})^c}(x_{Q'})- \vec{\alpha}\big|^2dX \leq C_{\epsilon}\,
r^2\iint_{\Omega^{fat}_{\F_2(\epsilon r),Q'}}|\nabla^2\s 1(X)|^2dX.
$$
By this estimate, Cauchy-Schwarz, \eqref{CFMS},  Harnack, and \eqref{carlsmall},
we therefore obtain that
\begin{multline*}
\null\!\!\!\!\!\frac{\hm^{X_0}(Q')}{\sigma(Q')}\,\frac1r\, I'
\leq\,C_{\eps} \left(\hm^{X_0}(Q')\right)^{1/2}
\left(\frac{G(X_{Q'},X_0)}{r}\iint_{\Omega^{fat}_{\F_2(\eps r),Q'}}|\nabla^2\s 1(X)|^2\,\delta(X)\,dX\right)^{1/2}
\\[4pt] \leq\,C_{\eps}\left(\hm^{X_0}(Q')\right)^{1/2}
\left(\iint_{\Omega^{fat}_{\F_2(\eps r),Q'}}|\nabla^2\s 1|^2\,G(X,X_0)\,dX\right)^{1/2}
\leq C_{\eps}\,\gamma^{1/2}\, \hm^{X_0}(Q')\,,
\end{multline*}
which also may be hidden, by choice of $\gamma$ small enough, depending of course on $\eps$.
Thus, after hiding term I, and canceling $\hm^{X_0}(Q')$ in \eqref{eq4.main},
we have that $r^{n+1} \lesssim II+III$. Moreover, since $\nabla \s 1_{\kappa Q'}$ enjoys an  $L^q$ estimate, as in \cite[Proof of Lemma 5.10]{HM-I}, we have that
$$II + III \lesssim |\left(\Omega_{\F_2,Q}\right)_{ext}\cap B'_{Q'}|^{1/q'}r^{(n+1)/q}\,+
\,|\left(\Omega_{\F_2,Q}\right)_{ext}\cap B'_{Q'}|\,,$$
and therefore
$$r^{n+1} \lesssim |\left(\Omega_{\F_2,Q}\right)_{ext}\cap B'_{Q'}|\,.$$

Thus, by \cite[Lemma 5.7, proof of Lemma 5.10]{HM-I},  $\Omega_{\F_2,Q}$ satisfies a two-sided Corkscrew condition, at all scales $\lesssim \ell(Q)$.
Consequently, by \cite{DJe},  we have that $\tom^{X_Q}$,
the harmonic measure for $\Omega_{\F_2,Q}$, with pole at $X_Q$,
belongs to $A_\infty(\pom_{\F_2,Q})$, with respect to surface measure $\ts:= H^n|_{\Omega_{\F_2,Q}}$.
Here, $X_Q$ is a common Corkscrew point for $\Omega$, with respect to $Q$,
and for $\Omega_{\F_2,Q}$, with respect to $\partial \Omega_{\F_2,Q}$ (where we view the latter as a
surface ball on itself, of radius $\approx \ell(Q)$).  That such a common
Corkscrew point exists is proved in \cite[Proposition 6.4]{HM-I}.
Then by \cite[Lemma B.6, Lemma 6.15]{HM-I}, we have that $\P_{\F_2}^{\,\sigma}\hm^{X_Q}$ belongs to $A_{\infty}^{\rm dyadic}(Q),$
with respect to $\sigma$.
 We further observe that $X_0$,
as chosen in Lemma \ref{lemma:Carleson}, is
effectively a Corkscrew point relative to $Q_0$, by the Harnack Chain condition.  Thus,
by a comparison principle argument (see \cite[Corollary 3.69]{HM-I}),
we may change the pole to obtain that $\P_{\F_2}^{\,\sigma}\hm^{X_{0}}$ belongs
to $A_{\infty}^{\rm dyadic}(Q)$.  Note that
by definition, $\P_{\F_2}^{\,\sigma}= \P_{\F}^{\,\sigma}\P_{\F_0}^{\,\sigma}$. Using this fact, and
the definition of $\hm_0$ (see \eqref{eq2.hm0def}),  we  obtain as desired \eqref{eq1.14+}:
$$
\Pf^\sigma \hm_0
=
\P_{\F}^{\,\sigma}\P_{\F_0}^{\,\sigma}\hm^{X_0}
=
\P_{\F_2}^{\,\sigma} \hm^{X_0}
\in A_{\infty}^{\rm dyadic}(Q).
$$

\subsection{Step 3: UR for $E$}

From the previous step, and recalling that  abusing the notation $\Omega$ is really $\Omega_N$, we know that $\pom_N$ is UR with uniform bounds. Then we invoke \cite[Section 2.6, Remark 2.93]{HMU} and conclude that $\pom$ is UR as desired.

\end{document}